\apptocmd{\sloppy}{\hbadness 10000\relax}{}{}
\numberwithin{equation}{section}
\newtheorem{thm}[equation]{Theorem}
\newtheorem{lem}[equation]{Lemma}
\newtheorem{cor}[equation]{Corollary}
\newtheorem{conj}[equation]{Conjecture}
\theoremstyle{definition}
\newtheorem{rmk}[equation]{Remark}
\newtheorem{defn}[equation]{Definition}
\newcommand{\F}{\mathbb{F}}
\newcommand{\bP}{\mathbb{P}}
\DeclareMathOperator{\charp}{char}
\DeclareMathOperator{\PSL}{PSL}
\DeclareMathOperator{\Aut}{Aut}
\DeclareMathOperator{\Hom}{Hom}
\DeclareMathOperator{\Gal}{Gal}
\DeclareMathOperator{\Frob}{Frob}
\newcommand{\mybar}[1]{#1\llap{$\overline{\phantom{\rm#1}}$}}
\newcommand{\abs}[1]{\lvert #1 \rvert}
\begin{document}

\title[Exceptionality and the Carlitz--Wan conjecture]{Exceptional extensions of local fields and the Carlitz--Wan conjecture}

\author{Zhiguo Ding}
\address{
  School of Mathematics and Statistics,
  Hunan Research Center of the Basic Discipline for Analytical Mathematics, HNP-LAMA,
  Central South University,
  Changsha 410083, China
}
\email{ding8191@csu.edu.cn}

\author{Wei Xiong}
\address{
  School of Mathematics,
  Hunan University,
  Changsha 410082, China
}
\email{weixiong@amss.ac.cn}

\author{Qifan Zhang}
\address{
  College of Mathematics,
  Sichuan University,
  Chengdu 610064, China
}
\email{zhangqifan@scu.edu.cn}

\date{\today}

\thanks
{The first author thanks Prof. Michael Zieve for introducing him to this subject, and in particular for communicating Lenstra's unpublished arguments.
The second author was supported by the Natural Science Foundation of Hunan Province of China (Grant No. 2020JJ4164).
All the authors thank Prof. Ye Tian for his help and encouragement, which in particular lead to this collaboration.}

\begin{abstract}
For any prime power $q$, a polynomial $f(X)\in\F_q[X]$ is ``exceptional'' if it induces bijections of $\F_{q^k}$ for infinitely many $k$; this condition is known to be equivalent to $f(X)$ inducing a bijection of $\F_{q^k}$ for at least one $k$ with $q^k\ge \deg(f)^4$.
In this paper, we introduce the notion of an ``exceptional'' extension of local fields of any characteristic, and show that if $f(X)\in\F_q[X]$ is exceptional in the classical sense then the field extension $\F_q(X)/\F_q(f(X))$ yields an exceptional local field extension upon passing to the completion at a degree-$1$ place.
We describe all exceptional local field extensions of degree coprime to the residue characteristic, determine the relationship between exceptionality of a local field extension and exceptionality of a subextension, and give various Galois-theoretic characterizations of exceptional local field extensions.
As a consequence, we obtain three new proofs, using quite different tools, of a theorem of Guralnick and M\"uller about ramification indices in exceptional maps between curves over $\F_q$. This theorem generalizes a result of Lenstra which subsumes earlier conjectures of Carlitz and Wan.
\end{abstract}

\dedicatory{Dedicated to Professor Daqing Wan in honor of his {\rm 60}th Birthday}

\maketitle

\section{Introduction}

A polynomial $f(X)\in\F_q[X]$ is called a \emph{permutation polynomial} if the map $\alpha \mapsto f(\alpha)$ defines a bijection on $\F_q$. Permutation polynomials arise in various contexts in math and engineering. Dickson determined the low-degree permutation polynomials over prime fields \cite{Dickson-prime} and over arbitrary finite fields \cite{Di}. See \cite{DZquad} for some recent progress on the study of permutation polynomials.

A rational function $f(X) \in \F_q(X)$ is called \emph{exceptional} if the map $\alpha \mapsto f(\alpha)$ is a bijection on $\bP^1(\F_{q^{k}})\colonequals  \F_{q^k}\cup\{\infty\}$ for infinitely many positive integers $k$. The study of exceptional rational functions was initiated in Dickson's 1896 thesis \cite{Di}, which investigated the special case of exceptional polynomials. See \cite{DZexc, DZAA, GM, GMS, GRZ, GTZ, GZ, LZ, Z} for some recent progress on exceptional rational functions.

One of the most famous problems in this subject is the Carlitz--Wan conjecture. Cavior \cite{Cavior-octic} first posed the ``Carlitz conjecture'' as a question. Cavior's advisor Carlitz mentioned this question later in a talk, after which the question became known as the Carlitz conjecture.

\begin{conj}[The Carlitz Conjecture] \label{carlitz}
There exists no even-degree exceptional polynomial over\/ $\F_q$ for any odd prime power $q$.
\end{conj}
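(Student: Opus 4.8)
The plan is to deduce Conjecture~\ref{carlitz} — and in fact the stronger Carlitz--Wan conjecture, that an exceptional polynomial of degree $n$ over $\F_q$ forces $\gcd(n,q-1)=1$ — from the local theory of this paper, rather than from the global arguments of Fried--Guralnick--Saxl. Let $f(X)\in\F_q[X]$ be exceptional of degree $n$, let $p=\charp\F_q$, and suppose $q$ is odd; I seek a contradiction when $n$ is even (more generally when $\gcd(n,q-1)>1$). Since $f$ is a polynomial, the place $\infty$ of $\F_q(f(X))$ is a degree-$1$ place, and $\F_q(X)$ has exactly one place above it, namely $\infty$, totally ramified with ramification index $n$ and residue field $\F_q$. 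By the bridge theorem relating classical exceptionality to local exceptionality, the completed extension
\[
L/K\colonequals\F_q(X)_{\infty}\,/\,\F_q(f(X))_{\infty}
\]
is an exceptional extension of local fields; here $K\cong\F_q((u))$, $L\cong\F_q((v))$, and $L/K$ is totally ramified of degree $n$. Hence it suffices to show that there is no totally ramified exceptional extension of $\F_q((u))$ of even degree (more generally, of degree $n$ with $\gcd(n,q-1)>1$) when $q$ is odd.

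First I would treat the case $p\nmid n$, where $L/K$ is tamely and totally ramified. Its Galois closure $\widetilde L/K$ then has Galois group $C_{n}\rtimes C$, where the normal cyclic tame inertia group $C_{n}$ is the geometric monodromy group, acting regularly on the $n$ conjugates of a uniformizer of $L$, and where $C$ is cyclic, generated by the residue Frobenius, and acts on $C_{n}$ by raising to the $q$-th power; identifying the $n$ conjugates with $\Z/n$, the arithmetic monodromy group is generated by the translations and by multiplication by $q$. The orbits of the geometric group on ordered pairs of distinct conjugates are indexed by their differences in $\Z/n\setminus\{0\}$, and passing to the arithmetic group merges the orbits labelled $\delta$ and $q\delta$; hence $L/K$ is exceptional exactly when multiplication by $q$ fixes no nonzero element of $\Z/n$, that is, exactly when $\gcd(n,q-1)=1$ — which is what the paper's description of exceptional local extensions of degree prime to $p$ records in the totally ramified tame case. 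Since $n$ is even and $q$ is odd, $2\mid\gcd(n,q-1)$, a contradiction.

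Next I would reduce the case $p\mid n$ to the tame case. Here $p$ is odd, so writing $n=p^{a}m$ with $p\nmid m$ and $a\ge 1$ gives $m$ even, while $\gcd(n,q-1)=\gcd(m,q-1)$ because $p\nmid q-1$. The totally ramified extension $L/K$ has a unique maximal tamely ramified subextension $L_{0}$, with $K\subseteq L_{0}\subseteq L$, $[L_{0}:K]=m$, and $L/L_{0}$ totally wildly ramified of degree $p^{a}$. Invoking the paper's analysis of exceptionality under passage to subextensions — in particular that an exceptional extension has exceptional subextensions over its base — the extension $L_{0}/K$ is tamely and totally ramified, exceptional, and of even degree $m$, contradicting the previous paragraph. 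This proves Conjecture~\ref{carlitz}; using the hypothesis $\gcd(n,q-1)>1$ in place of evenness throughout gives the full Carlitz--Wan conjecture.

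Almost all the weight of this argument rests on the two imported ingredients, and the point is that each should be available without the classification of finite simple groups: geometric monodromy groups of local extensions are extensions of a cyclic prime-to-$p$ group by a $p$-group, so both the tame classification and the wild-to-tame descent are elementary, unlike the primitive monodromy groups one must control globally. I therefore expect the genuinely new work to be the bridge theorem — that classical exceptionality localizes to an exceptional local extension at a rational place — and the step in the reduction above that demands the most care to be the wild-to-tame descent, namely verifying that exceptionality of a totally ramified local extension $L/K$ really does pass to its maximal tame subextension $L_{0}/K$; this is precisely the step that discards the wild part at no cost and replaces the deep global group theory by a one-line gcd computation.
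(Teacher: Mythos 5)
Your argument is correct and coincides in essence with the paper's own strategy, in particular the second proof of Theorem~\ref{local} given in Section~5: localize via the bridge result Theorem~\ref{gtol}, strip off the wild part using the subextension Theorem~\ref{subext}, and conclude with the tame classification of Theorem~\ref{coprime}. Your orbit count on $\Z/n$ using the semidirect-product structure $C_n\rtimes C$ of the Galois closure is a group-theoretic recasting of the Kummer-theoretic computation with uniformizers and roots of unity that the paper uses to prove Theorem~\ref{coprime}.
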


The Carlitz conjecture was proved for polynomials of degree coprime to $q$ by Davenport and Lewis \cite{DL} and Hayes \cite{Hayes}, which in particular includes the case of degree $2^k$. Hayes \cite{Hayes} and Wan \cite{Wan-Carlitz} made progress on the Carlitz conjecture in low degrees, then the degree twice a prime case was resolved by Cohen \cite{Cohen-primitive} and Wan \cite{Wan-singularities}. The Carlitz conjecture in full generality was first proved by Fried, Guralnick and Saxl \cite{FGS} using the classification of finite simple groups.

Wan \cite{Wan-Carlitz-Wan_conjecture} posed and proved in some cases a generalization of the Carlitz conjecture, which became known as the Carlitz--Wan conjecture.

\begin{conj}[The Carlitz--Wan Conjecture] \label{carlitz-wan}
For any prime power $q$, every exceptional polynomial over\/ $\F_q$ has degree coprime to $q-1$.
\end{conj}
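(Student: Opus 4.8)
The plan is to reduce the Carlitz--Wan conjecture to an elementary orbit computation for affine groups, using the dictionary between classical exceptionality and local exceptionality established in this paper. First I would reduce to the case that $f$ is separable: if $f(X)=g(X^{p})$ with $p=\charp(\F_q)$, then since $X\mapsto X^{p}$ permutes every $\F_{q^{k}}$, the polynomial $f$ is exceptional if and only if $g$ is, while $\deg g=\deg f/p$ and $p\nmid q-1$, so an induction on the degree lets me assume $f'\ne 0$, equivalently that $\F_q(X)/\F_q(f(X))$ is separable. Write $n=\deg f=p^{a}m$ with $p\nmid m$; since $p\nmid q-1$ it suffices to prove $\gcd(m,q-1)=1$. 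Now pass to local fields at $\infty$: the place $\infty$ of $\F_q(f(X))$ has degree $1$, and in $\F_q(X)/\F_q(f(X))$ it is totally ramified of index $n$ with trivial residue extension---this is precisely the statement that $f$ is a polynomial. By the main reduction of this paper, exceptionality of $f$ implies that the completion $K'/K$ of $\F_q(X)/\F_q(f(X))$ at this place is an exceptional extension of local fields, totally ramified of degree $n$, with $K$ and $K'$ both having residue field $\F_q$.

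Next I would strip off the wild part. Let $K_{t}$ be the maximal tamely ramified subextension of $K'/K$; it is totally and tamely ramified of degree $m$ over $K$, still with residue field $\F_q$. By the results of this paper on the behaviour of exceptionality under passage to a subextension, exceptionality of $K'/K$ forces $K_{t}/K$ to be exceptional. It remains to analyze an exceptional, totally and tamely ramified extension of degree $m$ coprime to $p$ over a local field with residue field $\F_q$. The Galois closure of such an extension has group $A\cong(\Z/m\Z)\rtimes\langle q\rangle\le\AGL(1,\Z/m\Z)$, with inertia subgroup $I\cong\Z/m\Z$ acting regularly on the degree-$m$ coset space $\Omega$ on which $A$ acts; by this paper's Galois-theoretic characterization, $K_{t}/K$ is exceptional exactly when $I$ and $A$ have no common orbit on $\Omega\times\Omega\setminus\Delta$. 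The $I$-orbits on $\Omega\times\Omega\setminus\Delta$ are the ``difference classes'' indexed by $c\in\Z/m\Z\setminus\{0\}$, while the $A$-orbit through such a class is indexed by the $\langle q\rangle$-orbit of $c$ under multiplication, and the two coincide precisely when $qc\equiv c\pmod m$. Hence $K_{t}/K$ is exceptional if and only if the congruence $(q-1)c\equiv 0\pmod m$ has only the solution $c\equiv 0$, that is, if and only if $\gcd(m,q-1)=1$. Combining the steps gives $\gcd(n,q-1)=\gcd(m,q-1)=1$, which is the Carlitz--Wan conjecture; and if $q$ is odd and $n$ is even then $2\mid\gcd(n,q-1)$, a contradiction, so one also recovers the Carlitz conjecture.

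The genuine content lies entirely in the infrastructure I am permitting myself to assume: the definition of exceptionality for local field extensions, its compatibility with classical exceptionality under completion at a degree-$1$ place, its behaviour under subextensions, and its description in the tame case. Granting those, the only delicate points above are bookkeeping---one must check that completing at $\infty$ really does produce a totally ramified extension whose residue field is exactly $\F_q$ (so that the ambient ``$q$'' does not change) and that passing to the maximal tame subextension preserves both total ramification and the residue field (so that the relevant degree is indeed $m=n/p^{a}$). I expect the true obstacle, were one to prove rather than cite the supporting results, to be the subextension compatibility of local exceptionality: unlike its global analogue---which is immediate from the fact that a composition of maps is a bijection if and only if both factors are---it must be proved purely within local Galois theory, since the intermediate field $K_{t}$ need not be the completion of any intermediate curve.
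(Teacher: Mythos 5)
Your proposal is correct and follows essentially the same route as the paper's third proof (Section~5): pass to local fields at the totally ramified degree-$1$ place, use subextension compatibility of local exceptionality (Theorem~\ref{subext}) to strip off the wild part, and then classify tame exceptional extensions. The only genuine difference is in the treatment of the tame case. You do this by a direct orbit computation in the affine group $\Z/m\Z\rtimes\langle q\rangle$ acting on the coset space, identifying $I$-orbits on $\Omega\times\Omega\setminus\Delta$ with difference classes $d\in\Z/m\Z\setminus\{0\}$ and observing that the $A$-action permutes these by multiplication by $q$, so a common orbit exists iff $(q-1)d\equiv 0\pmod m$ has a nonzero solution. The paper instead proves Theorem~\ref{coprime} by Kummer theory: writing $L=K(y)$ with $y^n=z$ a uniformizer, identifying the conjugates $\sigma(L)=K(\zeta^i y)$ and the compositum $L.\sigma(L)=K(\zeta^i,y)$, and noting this is totally ramified over $K$ iff $\zeta^i\in K$. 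The two arguments are different presentations of the same fact; yours is phrased in the language of monodromy-group actions on $\Omega\times\Omega$ (Theorem~\ref{nt-ram}, item (3)), which makes the analogy with the global criterion of Lemma~\ref{standard} more visible and perhaps better illuminates why the conclusion has the form it does, while the paper's is more directly tied to the roots-of-unity picture and leads naturally to the intermediate-Galois-subextension characterization (item (4) of Theorem~\ref{coprime}). Both buy the same thing; neither is materially simpler. One minor point worth flagging: your closing remark that the subextension compatibility is ``the true obstacle'' is a fair assessment of where the difficulty lies, and the paper agrees---its proof of Theorem~\ref{subext} is the most technical piece, relying on lifting Frobenius elements between the Galois closures of $L/K$, $L/M$, and $M/K$.
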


The Carlitz conjecture follows from the Carlitz--Wan conjecture in the even degree case. Note that Hayes \cite{Hayes} had previously proved the Carlitz--Wan conjecture for polynomials of degree coprime to $q$. The Carlitz--Wan conjecture follows from \cite{FGS} in the case of characteristic at least $5$, which depends on the classification of finite simple groups.
The Carlitz--Wan conjecture in full generality was first proved in 1994 by Lenstra, via an argument which does not rely on any difficult results. Lenstra never published his proof, perhaps in part because Cohen and Fried \cite{CF} published a version of Lenstra's proof soon after he sent it to them. However, Lenstra communicated several versions of his proof to the finite fields community through his emails and his lectures.

The Carlitz--Wan conjecture about exceptional polynomials can be generalized to exceptional covers of curves. By a curve over $\F_q$ we mean a dimension-$1$ geometrically integral separated scheme over $\F_q$ of finite type. Throughout this paper we assume in addition that all curves are nonsingular and projective. See \cite{H, St} for basic knowledge of algebraic geometry and in particular algebraic curves. We need the following notion of exceptionality for curves.

\begin{defn} \label{c-exc}
A finite morphism $f\colon X \to Y$ of curves over $\F_q$ is called an \emph{exceptional cover} if the diagonal is the only geometrically irreducible component defined over $\F_q$ of the fiber product $X\times_Y X$.
\end{defn}

The prototypical examples of exceptional covers come from isogenies of elliptic curves over $\F_q$, which are exceptional if and only if there exist no nonzero $\F_q$-rational points in the kernel. See \cite{GTZ} for the study of exceptional covers of varieties of possibly higher dimension.

Guralnick and M\"uller \cite[\S 8]{GM} gave two abstract group-theoretic proofs of a generalization of the Carlitz--Wan conjecture as follows.

\begin{thm} \label{gcw}
Assume $f\colon X \to Y$ is a finite morphism of curves over\/ $\F_q$. Suppose $f\colon X \to Y$ is an exceptional cover. Then the ramification index of $f$ at any\/ $\F_q$-rational point of $X$ is coprime to $q-1$.
\end{thm}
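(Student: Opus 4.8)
The plan is to localize the cover at the point in question, transfer exceptionality to the resulting extension of local fields, and then extract the coprimality from the structure theory of exceptional local extensions. First we may assume $f$ is separable: the purely inseparable part of $f$ is a bijection on $\F_{q^k}$-points for every $k$, so it does not affect exceptionality, and it contributes only a power of $\charp(\F_q)$ to each ramification index, which is irrelevant to coprimality with $q-1$. Now fix $P\in X(\F_q)$, put $Q:=f(P)$ (which is $\F_q$-rational because $f$ is defined over $\F_q$), and let $e$ be the ramification index of $f$ at $P$. Completing $\F_q(Y)$ at $Q$ and $\F_q(X)$ at $P$ yields a finite separable extension $\mathcal L/\mathcal K$ of complete discretely valued fields whose residue fields both equal $\F_q$ (since $P,Q$ are degree-$1$ places), whose residue extension is trivial, and whose ramification index is exactly $e$.

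The first---and, I expect, hardest---step is to show that $\mathcal L/\mathcal K$ is an exceptional local field extension: this is the local analogue, for an arbitrary exceptional cover in place of a polynomial, of the transfer principle announced in the introduction. Concretely one compares the geometrically irreducible $\F_q$-components of the global fiber product $X\times_Y X$ with the local data lying over $Q$; the hypothesis of Definition~\ref{c-exc} forbids any non-diagonal geometrically irreducible $\F_q$-component of $X\times_Y X$ through a point above $Q$, and this is precisely what makes $\mathcal L/\mathcal K$ exceptional in the local sense. This descent is the one place where the global hypothesis is actually invoked; everything afterwards is purely local and comparatively formal.

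Granting this, it remains to check that an exceptional local extension $\mathcal L/\mathcal K$ with residue field $\F_q$ and ramification index $e$ has $\gcd(e,q-1)=1$. Write $e=e_0\,p^a$ with $p:=\charp(\F_q)$ and $\gcd(e_0,p)=1$; since $p\nmid q-1$ it suffices to show $\gcd(e_0,q-1)=1$. Assume for contradiction that a prime $\ell$ divides both $e_0$ and $q-1$. Inside $\mathcal L/\mathcal K$ there is a totally tamely ramified subextension $\mathcal L_0/\mathcal K$ of degree $e_0$, and any such extension has the form $\mathcal K(\varpi^{1/e_0})$ for a uniformizer $\varpi$ of $\mathcal K$; hence $\mathcal M:=\mathcal K(\varpi^{1/\ell})$ is an intermediate field with $[\mathcal M:\mathcal K]=\ell$. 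Because $\ell\mid q-1$, the residue field $\F_q$---and therefore $\mathcal K$---contains a primitive $\ell$-th root of unity, so $\mathcal M/\mathcal K$ is cyclic Galois of degree $\ell$. Now invoke two structural inputs from the introduction: exceptionality of $\mathcal L/\mathcal K$ is inherited by the intermediate extension $\mathcal M/\mathcal K$, so $\mathcal M/\mathcal K$ is exceptional; but a nontrivial Galois local extension is never exceptional---a special case of the Galois-theoretic characterization of exceptionality, mirroring the classical fact that a nontrivial Galois cover $M\to Y$ is not exceptional because $M\times_Y M$ splits off non-diagonal geometrically irreducible components defined over $\F_q$ (here $\mathcal M\otimes_{\mathcal K}\mathcal M\cong\prod_{\sigma\in\Gal(\mathcal M/\mathcal K)}\mathcal M$, with its Galois action defined over $\F_q$ since $\zeta_\ell\in\F_q$). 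This contradiction gives $\gcd(e_0,q-1)=1$, hence $\gcd(e,q-1)=1$. The paper's other two proofs presumably replace this last local step by a more group-theoretic or cohomological argument, but all three should route through the localization at $P$.
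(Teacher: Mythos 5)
Your overall architecture matches the paper's: reduce to the separable case, localize at $P$ to obtain a totally ramified local extension $\mathcal L/\mathcal K$ with residue field $\F_q$ and degree equal to the ramification index $e$, then argue (a) that $\mathcal L/\mathcal K$ is exceptional as a local field extension, and (b) that an exceptional totally ramified extension of degree $e$ has $\gcd(e,q-1)=1$. Your step (b) is correct and is essentially the paper's Theorem~\ref{coprime}: one extracts the tame subextension $\mathcal K(\varpi^{1/e_0})$ (which is of that form precisely because the residue extension is trivial), passes to $\mathcal K(\varpi^{1/\ell})$ when $\ell\mid\gcd(e_0,q-1)$, notes it is Galois because $\zeta_\ell\in\F_q\subset\mathcal K$, and concludes by the inheritance of exceptionality to subextensions (Theorem~\ref{subext}) together with the elementary fact that a nontrivial Galois local extension is never exceptional. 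You are right to flag inheritance as a structural input; it is a genuine theorem in the paper, proved by lifting Frobenius elements to intermediate Galois closures, not something that drops out of the definition.

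The genuine gap is step (a), the local transfer, which you correctly identify as the hardest point but then only gesture at. Your sentence that the hypothesis of Definition~\ref{c-exc} ``forbids any non-diagonal geometrically irreducible $\F_q$-component of $X\times_Y X$ through a point above $Q$'' does not isolate the relevant fact: the hypothesis forbids such components \emph{everywhere}, so this tells you nothing new about the fiber above $Q$. What the transfer (the paper's Theorem~\ref{gtol}) actually requires is a statement about the components that \emph{are} present but fail geometric irreducibility, namely that a nonsingular curve over $\F_q$ which is not geometrically irreducible has no $\F_q$-rational point (an $\F_q$-point would lie on every Frobenius-conjugate geometric component and hence be singular). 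It is this lemma, fed into a comparison of the two tensor-product decompositions of $L\otimes_K L_u$, that forces every non-diagonal local factor of $L_u\otimes_{K_v}L_u$ to have residue field strictly larger than $\F_q$, which is exceptionality. The paper spends three separate proofs (tensor products, the Lenstra property, monodromy/decomposition groups) making this transfer precise, and none of them is recovered by your sketch; as written, your proposal takes Theorem~\ref{gtol} as a black box.
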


As will be explained later in this paper, the Carlitz--Wan conjecture follows from the special case of Theorem~\ref{gcw} when both curves $X$ and $Y$ have genus $0$ and the $\F_q$-rational point is totally ramified.

In this paper, we give three different proofs of Theorem~\ref{gcw}, which may be accessible and appreciated by readers with various backgrounds. More importantly, we develop the new theory of exceptionality for local fields, whose applications include the enlightening of the truth of the Carlitz--Wan conjecture and its generalization Theorem~\ref{gcw}.

The following is our new notion of exceptionality about local fields.

\begin{defn}
A finite separable extension $L/K$ of local fields is \emph{exceptional} if the following hold where $N$ is the Galois closure of $L/K$:
\begin{enumerate}
\item $L$ is totally ramified over $K$;
\item for any $\sigma \in \Gal(N/K) \setminus \Gal(N/L)$ the compositum $L.\sigma(L)$ of $L$ and $\sigma(L)$ is not totally ramified over $K$.
\end{enumerate}
\end{defn}

In this paper, we show that if $f(X)\in\F_q(X)$ is exceptional in the classical sense then the field extension $\F_q(X)/\F_q(f(X))$ yields an exceptional local field extension upon passing to the completion at a degree-one place.
We determine the relationship between exceptionality of a local field extension and exceptionality of a subextension, and give various Galois-theoretic characterizations of exceptional local field extensions.
Moreover, the following result describes all exceptional local field extensions of degree coprime to the residue characteristic, which sheds new insights on the conclusion of the Carlitz--Wan conjecture and its generalization Theorem~\ref{gcw}.

\begin{thm} \label{coprime}
Assume $L/K$ is a finite separable extension of local fields. Write $n\colonequals [L:K]$ and let\/ $\F_q$ be the residue field of $K$. Suppose $L/K$ is totally ramified and $\gcd(n,q)=1$. Then the following are equivalent:
\begin{enumerate}
\item $L/K$ is exceptional;
\item $\gcd(n,q-1)=1$;
\item $K$ contains no nontrivial $n$-th root of unity;
\item there exists no intermediate field $M$ of $L/K$ such that $M$ is a proper Galois extension over $K$.
\end{enumerate}
\end{thm}

As a consequence of our theory of exceptionality for local fields, we obtain three new proofs, using quite different tools, of the Guralnick--M\"uller result Theorem~\ref{gcw}, which rely on three quite different collections of tools, all of which differ from the group-theoretic arguments used by Guralnick--M\"uller.

The rest of this paper is organized as follows. In Section 2 we introduce the new notion of exceptionality for local fields and obtain some elementary properties on exceptional local field extensions. In Section 3 we first show the Carlitz-Wan conjecture follows from Theorem~\ref{gcw} and then we give two quick proofs of Theorem~\ref{gcw}. In Section 4 we establish in detail the new theory of exceptionality for local fields. We conclude this paper in Section 5 by giving the third proof of Theorem~\ref{gcw}.


\section{Elementary properties on exceptional local field extensions}

The main result in this section is the following elementary property on exceptional extensions of local fields, which will be used to give two quick proofs the generalization Theorem~\ref{gcw} of the Carlitz--Wan conjecture. See \cite{BGR, C, CaF, DZJA, Ne, S} for basic knowledge of local fields.

\begin{thm} \label{local}
Assume $L/K$ is an exceptional extension of local fields of degree $n$. Then $\gcd(n,q-1)=1$ where\/ $\F_q$ is the residue field of $K$.
\end{thm}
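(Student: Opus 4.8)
I would argue by contradiction. Suppose a prime $\ell$ divides $\gcd(n,q-1)$. Since $q$ is a power of the residue characteristic $p$ and $p\nmid q-1$, we have $\ell\neq p$. Pass to the Galois closure $N$ of $L/K$ and put $G:=\Gal(N/K)$, $H:=\Gal(N/L)$ (so $[G:H]=n$), $I\trianglelefteq G$ the inertia subgroup, and $P\trianglelefteq G$ its Sylow $p$-subgroup, the wild inertia; then $I/P$ is cyclic of order prime to $p$ and $G/I$ is cyclic, generated by a Frobenius element which acts on $I/P$ by the $q$-th power. Total ramification of $L/K$ says exactly that $L$ meets the maximal unramified subextension of $N/K$ trivially, i.e.\ $HI=G$; hence $n=[I:H\cap I]$, and since $\ell\neq p$ the prime $\ell$ divides $m:=[I/P:\overline{H\cap I}]$, the prime-to-$p$ part of $n$ (bars denote images modulo $P$).

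Exceptionality of $L/K$ is the assertion that for every $\sigma\in G\setminus H$ the field $L\cdot\sigma(L)$ is not totally ramified over $K$, i.e.\ $H\cap\sigma H\sigma^{-1}$ does not surject onto $G/I$. So it suffices to produce a single $\sigma\in G\setminus H$ with $H\cap\sigma H\sigma^{-1}$ surjecting onto $G/I$. The simplest instance of the mechanism: if $L/K$ is Galois then $N=L$, $H=1$, and any $\sigma\neq1$ has $\sigma(L)=L$, so $L\cdot\sigma(L)=L$ is totally ramified — hence an exceptional extension of degree $>1$ is never Galois. The general construction elaborates this.

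Work in the metacyclic quotient $\overline G=G/P$. Write $\overline I=I/P=\langle c\rangle$, of order $t$ with $\ell\mid t$, with Frobenius acting by $c\mapsto c^{q'}$ for a power $q'$ of $q$, and choose $\overline{h_0}\in\overline H:=HP/P$ mapping onto a generator of $\overline G/\overline I$, so that $\overline H$ is generated by $\overline{H\cap I}=\langle c^{m}\rangle$ together with $\overline{h_0}$. Because $\ell$ divides both $t$ and $q'-1$, the element $\overline\sigma:=c^{\,m/\ell}$ lies outside $\overline H$ but normalizes it: indeed $\overline\sigma$ centralizes $c^{m}$, and $\overline\sigma^{\,-1}\overline{h_0}\,\overline\sigma=\overline{h_0}\,c^{\,k}$ with $m\mid k$ (as $\ell\mid q'-1$). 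When $L/K$ is tamely ramified, $P=1$ and $\overline\sigma$ itself normalizes $H$, so $\sigma:=\overline\sigma$ already gives a nontrivial $K$-automorphism of $L$ with $\sigma(L)=L$, contradicting exceptionality. In general one lifts $\overline\sigma$ to some $\sigma\in I\setminus H$, and then $H\cap\sigma H\sigma^{-1}$ surjects onto $G/I$ \emph{modulo} $P$; the remaining task is to promote this to an honest surjection.

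The one real obstacle is exactly this last step — controlling the wild inertia $P$, which can be nontrivial even when $L/K$ and its maximal tamely ramified subextension $L_t/K$ are themselves tame. A lift $\sigma$ is automatically outside $H$, and the image of $H\cap\sigma H\sigma^{-1}$ in $\overline G$ lands in $\overline H$, but promoting ``surjects after killing $P$'' to ``surjects'' amounts to showing that the wild ramification of $N/K$ introduces no residue extension into $L\cdot\sigma(L)$. I would handle this using that $P\trianglelefteq G$ is a $p$-group with $\gcd(|P|,[I:P])=1$: Schur–Zassenhaus splits $I=P\rtimes(I/P)$, and a Frattini-type argument lets one choose the lift $\sigma$ and a suitable element of $H$ inside matching $P$-cosets. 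Equivalently — and more cleanly — one proves directly that exceptionality of $L/K$ descends to $L_t/K$, and then finishes in the tame metacyclic case above, noting $\gcd([L_t:K],q-1)=\gcd(n,q-1)$ since $p\nmid q-1$. Establishing that descent, or the wild-inertia bookkeeping it replaces, is the crux; everything else is routine.
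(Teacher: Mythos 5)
Your plan is internally coherent, and your ``cleaner route'' --- reduce to the maximal tamely ramified subextension $L_t/K$ and argue there --- is essentially the paper's Section~5 proof of this theorem, which combines the descent result (Theorem~\ref{subext}: $L/K$ exceptional $\Leftrightarrow$ $L/M$ and $M/K$ exceptional for any intermediate $M$) with a full characterization of tame exceptional extensions (Theorem~\ref{coprime}). But you have left that descent unproved, and you yourself acknowledge it is ``the crux.'' The Schur--Zassenhaus/Frattini alternative is even vaguer: the issue is not splitting $I$ over $P$ but ensuring that some lift $\sigma$ of $\overline\sigma$ yields $H\cap\sigma H\sigma^{-1}$ surjecting onto $G/I$ on the nose, and no concrete mechanism for this is given. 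So as written the argument has a genuine gap exactly where you flagged the difficulty.

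The paper's Section~2 proof sidesteps both of your suggested fills, and this comparison is instructive. Rather than reasoning inside $G/P$ and then lifting, it works directly with uniformizers: write a uniformizer $z$ of $K$ as $z=x^n u$ with $x$ a uniformizer of $L$ and $u\in\mathcal O_L^\times$ having constant term $1$; since $p\nmid m$ one can extract an $m$-th root of $u$, giving $z=y^m$ for $y:=x^{p^\ell}\cdot(\text{unit})$, so $M:=K(y)\subseteq L$ is a degree-$m$ Eisenstein (hence totally ramified tame) subextension exhibited with no group theory at all. If $K$ contained a primitive $d$-th root of unity with $1<d\mid m$, Kummer theory makes $K(y^{m/d})\subseteq L$ a proper Galois extension of $K$. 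The only lemma needed (Lemma~\ref{galois}) is then much weaker than full descent: exceptional $L/K$ has no proper Galois subextension $M'/K$. Its proof is a clean Frobenius-coset count --- exceptionality forces the $\Frob(N/\sigma(L))$ to be pairwise disjoint with union $\Frob(N/K)$, so any Galois intermediate $M'$ has $\Frob(N/M')=\Frob(N/K)$ and hence $M'=K$. The explicit Kummer subextension plus this Frobenius count together bypass the wild-inertia bookkeeping entirely, which is why the paper can give a short self-contained Section~2 argument while reserving Theorem~\ref{subext} for the more elaborate Section~5 proof.
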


By a local field we mean a field which is complete with respect to a discrete valuation and has finite residue field. More explicitly, a local field is either a finite extension of the field $\mathbb{Q}_p$ of $p$-adic numbers or the field $\F_q((X))$ of formal Laurent series over a finite field $\F_q$. We recall the notion of exceptionality for local fields as follows.

\begin{defn}
A finite separable extension $L/K$ of local fields is \emph{exceptional} if the following hold where $N$ is the Galois closure of $L/K$:
\begin{enumerate}
\item $L$ is totally ramified over $K$;
\item for any $\sigma \in \Gal(N/K) \setminus \Gal(N/L)$ the compositum $L.\sigma(L)$ of $L$ and $\sigma(L)$ is not totally ramified over $K$.
\end{enumerate}
\end{defn}

\begin{rmk}
In particular, if $L/K$ is an exceptional extension of local fields, then the identity map is the only automorphism of $L$ which fixes each element of $K$.
For, any such automorphism can be extended to an element $\sigma$ of $\Gal(N/K)$ for which $\sigma(L)=L$, and then condition (2) implies $\sigma\in\Gal(N/L)$, so that $\sigma\vert_L$ is the identity on $L$.
Thus, an exceptional local field extension $L/K$ satisfies $\Aut_K(L)=\{\text{Id}_L\}$, which is the opposite extreme from $L/K$ being Galois.
\end{rmk}

\begin{rmk}
Note that elements in $\Gal(N/K) \setminus \Gal(N/L)$ correspond canonically to non-identity elements in $\Hom_K(L,N)$, which is the set of all $K$-algebra homomorphisms of $L$ into $N$. Thus exceptional local field extensions are the opposite of being Galois extensions, in the sense that there is no nontrivial $K$-algebra endomorphism of $L$.
\end{rmk}

We need the following result in the proof of Theorem~\ref{local}.

\begin{lem} \label{galois}
Any exceptional extension $L/K$ of local fields has no intermediate field $M$ such that $M$ is a proper Galois extension over $K$.
\end{lem}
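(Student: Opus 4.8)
The plan is to argue by contradiction: suppose $M$ is an intermediate field of $L/K$ that is a proper Galois extension over $K$, and derive a failure of condition (2) in the definition of exceptionality. Since $L/K$ is totally ramified, so is $M/K$, so $M/K$ is a nontrivial totally ramified Galois extension sitting inside $L$. Let $N$ be the Galois closure of $L/K$; then $M\subseteq L\subseteq N$ and $M$ is stable under $\Gal(N/K)$ because $M/K$ is Galois. Since $[M:K]>1$ we have $\Gal(N/M)\subsetneq\Gal(N/K)$, and since $M\subseteq L$ we have $\Gal(N/L)\subseteq\Gal(N/M)$. The key point will be to produce $\sigma\in\Gal(N/K)\setminus\Gal(N/L)$ with $\sigma(L)=L$ (or at least with $L.\sigma(L)$ still totally ramified over $K$), contradicting (2).

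The natural choice is to pick $\sigma\in\Gal(N/K)$ whose restriction to $M$ is a nontrivial element of $\Gal(M/K)$ but which stabilizes $L$. Here is the mechanism: because $M/K$ is Galois, every $K$-embedding of $M$ into $N$ is an automorphism of $M$, so $\Gal(N/K)$ acts on $M$ through $\Gal(M/K)$; choose any $\tau\in\Gal(M/K)$ with $\tau\ne\mathrm{Id}$, and lift it to $\sigma\in\Gal(N/K)$. Then $\sigma\notin\Gal(N/M)\supseteq\Gal(N/L)$... wait, that inclusion goes the wrong way for the conclusion $\sigma\notin\Gal(N/L)$; one must instead observe that $\sigma|_M=\tau\ne\mathrm{Id}$ forces $\sigma|_L\ne\mathrm{Id}$ since $M\subseteq L$, hence indeed $\sigma\notin\Gal(N/L)$. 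The remaining task — and this is the real content — is to show $L.\sigma(L)$ is totally ramified over $K$. Here I would exploit that $\sigma(L)$ and $L$ are both totally ramified over $K$ of the same degree $n=[L:K]$, and that they share the subfield $M$: since $\sigma$ stabilizes $M$, we have $M\subseteq L\cap\sigma(L)$. One cannot in general conclude a compositum of two totally ramified extensions is totally ramified, so the argument must be more careful — but in fact we get to choose $\sigma$ more cleverly.

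The cleaner route, which I expect to be the actual proof: choose $\sigma\in\Gal(N/K)$ to lie in $\Gal(N/M)$ itself, so that $\sigma$ fixes $M$ pointwise but is chosen outside $\Gal(N/L)$ — this is possible precisely because $\Gal(N/L)\subsetneq\Gal(N/M)$, which holds iff $M\subsetneq L$; if $M=L$ then $L/K$ is already Galois, so $\Aut_K(L)=\Gal(L/K)$ is nontrivial (as $[L:K]>1$), directly contradicting the Remark that an exceptional extension has $\Aut_K(L)=\{\mathrm{Id}_L\}$. So assume $M\subsetneq L$ and pick $\sigma\in\Gal(N/M)\setminus\Gal(N/L)$. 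Then $\sigma$ fixes $M$, so $L$ and $\sigma(L)$ both contain $M$ and both are totally ramified over $M$ (being subextensions of the totally ramified $L/K$, wait — $\sigma(L)/M$ totally ramified since $\sigma$ is an automorphism and $L/M$ is totally ramified). The main obstacle is then showing $L.\sigma(L)$ is totally ramified over $M$, hence over $K$; for this I would use that $\sigma|_L$ is a $K$-embedding of $L$ into $N$ fixing $M$, and try to arrange — via a suitable choice of $\sigma$ within its coset, or via a ramification-group argument on $\Gal(N/K)$ — that $L.\sigma(L)$ has ramification index equal to its degree over $K$. Concretely, if $M$ is the fixed field of a normal subgroup and one works in the tame or wild filtration, one can often force $\sigma(L)=L$ outright when $\Gal(N/L)$ is not normal in $\Gal(N/M)$; if $\Gal(N/L)\trianglelefteq\Gal(N/M)$ then $L/M$ is Galois and one pushes the Galois-ness down, iterating. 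I anticipate the write-up will combine the $\Aut_K(L)=\{\mathrm{Id}\}$ observation with a short induction on $[L:M]$, the base case being the contradiction above; the genuinely delicate step is controlling ramification of the compositum $L.\sigma(L)$, which is where a lemma on ramification in composita of totally ramified extensions sharing a Galois subextension will be invoked.
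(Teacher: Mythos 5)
Your approach is fundamentally different from the paper's and contains a genuine gap that you flag yourself but do not close. You reduce the lemma to exhibiting a single $\sigma\in\Gal(N/K)\setminus\Gal(N/L)$ with $L.\sigma(L)$ totally ramified over $K$; the base case $M=L$ is indeed handled by the observation $\Aut_K(L)=\{\mathrm{Id}_L\}$, but for $M\subsetneq L$ the step you call ``genuinely delicate'' --- showing that some $\sigma\in\Gal(N/M)\setminus\Gal(N/L)$ yields a totally ramified compositum --- is the entire content, and the ``lemma on ramification in composita of totally ramified extensions sharing a Galois subextension'' you anticipate invoking is not a standard fact and cannot be expected to hold: the statement you would need is essentially the negation of exceptionality, so it cannot be deduced by inspecting the ramification of one cleverly chosen $\sigma$. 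The mere presence of a Galois intermediate $M$ gives no local control over the residue field of $L.\sigma(L)$ for any particular $\sigma$; the obstruction has to be detected globally.

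The paper's proof sidesteps the search for a bad $\sigma$ altogether and runs a pigeonhole argument on Frobenius cosets. Setting $\Frob(N/K')\colonequals\rho^{-1}(\theta)\cap\Gal(N/K')$, where $\rho\colon\Gal(N/K)\to\Gal(\mybar N/\mybar K)$ is reduction and $\theta$ is the $q$-power Frobenius, exceptionality says precisely that the sets $\Frob(N/\sigma(L))$, as $\sigma$ ranges over coset representatives of $\Gal(N/L)$ in $\Gal(N/K)$, are pairwise disjoint (a compositum $\sigma(L).\tau(L)$ fails to be totally ramified if and only if its residue field strictly contains $\F_q$, if and only if $\Gal(N/\sigma(L).\tau(L))$ misses $\rho^{-1}(\theta)$). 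A cardinality count using surjectivity of $\rho$ and total ramification of $L/K$ shows these disjoint pieces exactly exhaust $\Frob(N/K)$. Now if $M$ is a Galois intermediate field, then $M=\sigma(M)\subseteq\sigma(L)$ for every $\sigma$, so $\Frob(N/\sigma(L))\subseteq\Frob(N/M)$ for every $\sigma$, whence $\Frob(N/M)=\Frob(N/K)$; since $M/K$ is totally ramified this forces $\abs{\Gal(N/M)}=\abs{\Gal(N/K)}$, i.e.\ $M=K$. Nothing in your proposal anticipates this counting step, and without it the argument does not go through.
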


\begin{proof}
Let $N$ be the Galois closure of $L/K$. Write $\mybar N, \mybar L, \mybar K$ for the residue fields of $N, L, K$ respectively, so that $\mybar L = \mybar K = \F_q$ for some power $q$ of the characteristic $p$ of $\mybar K$. By passing to the residue fields we have a canonical homomorphism $\rho \colon \Gal(N/K) \to \Gal(\mybar N/\mybar K)$ of groups, which is surjective since $\mybar K$ is perfect. Let $\theta$ be the $q$-th power map on $\mybar N$, so that $\theta$ is a generator of $\Gal(\mybar N/\mybar K)$. Write $\Frob(N/K')\colonequals  \rho^{-1}(\theta)\cap \Gal(N/K')$ for any intermediate field $K'$ of $N/K$.

First, we claim that $\Frob(N/\sigma(L))$ and $\Frob(N/\tau(L))$ are disjoint for any $\sigma,\tau \in \Gal(N/K)$ with $\sigma^{-1}\tau \notin \Gal(N/L)$. Henceforth we suppose $\sigma,\tau \in \Gal(N/K)$ with $\sigma^{-1}\tau \notin \Gal(N/L)$. Since $L/K$ is exceptional $L.(\sigma^{-1}\tau)(L)$ is not totally ramified over $K$, or equivalently $\sigma(L).\tau(L)$ is not totally ramified over $K$, which says the residue field of $\sigma(L).\tau(L)$ is a proper extension over $\F_q$. It follows that $\Gal(N/\sigma(L).\tau(L))$ is disjoint from $\rho^{-1}(\theta)$, which implies $\Frob(N/\sigma(L))$ and $\Frob(N/\tau(L))$ are disjoint. This concludes the proof of our claim.

Next, let us show $\Frob(N/K)$ is the union of $\Frob(N/\sigma(L))$ with $\sigma\in\Gal(N/K)$. Since the homomorphism $\rho \colon \Gal(N/K) \to \Gal(\mybar N/\mybar K)$ is surjective we know that $\abs{\Gal(N/K)} = \abs{\Frob(N/K)} \cdot \abs{\Gal(\mybar N/\mybar K)}$. Similarly we have $\abs{\Gal(N/\sigma(L))} = \abs{\Frob(N/\sigma(L))} \cdot \abs{\Gal(\mybar N/\mybar K)}$ for any $\sigma \in \Gal(N/K)$. Hence for any $\sigma \in \Gal(N/K)$ we have
\[
\frac{\abs{\Frob(N/K)}}{\abs{\Frob(N/\sigma(L))}} = \frac{\abs{\Gal(N/K)}}{\abs{\Gal(N/\sigma(L))}} = [\sigma(L):K] = [L:K].
\]
The claim says that $\Frob(N/\sigma(L))$ and $\Frob(N/\tau(L))$ are disjoint for any $\sigma,\tau \in \Gal(N/K)$ which lie in different left cosets of $\Gal(N/L)$ in $\Gal(N/K)$. It follows that
\[
\Frob(N/K) = \bigcup_{\sigma\in\Gal(N/K)} \Frob(N/\sigma(L))
\]
since there are $[L:K]$ left cosets of $\Gal(N/L)$ in $\Gal(N/K)$.

Suppose $M$ is an intermediate field of $L/K$ such that $M$ is Galois over $K$, so that $\Frob(N/M) = \Frob(N/K)$ since
\[
\Frob(N/K) = \bigcup_{\sigma\in\Gal(N/K)} \Frob(N/\sigma(L)) \subseteq \Frob(N/M) \subseteq \Frob(N/K).
\]
Since $M/K$ is totally ramified we have $\abs{\Gal(N/M)} = \abs{\Frob(N/M)} \cdot \abs{\Gal(\mybar N/\mybar K)}$, so that $\abs{\Gal(N/M)} = \abs{\Gal(N/K)}$, which implies that $\Gal(N/M)=\Gal(N/K)$, or equivalently $M=K$.
\end{proof}

Now we are ready to prove Theorem~\ref{local} as follows.

\begin{proof}[Proof of Theorem~\ref{local}]
Let $\mathcal{O}_K$ be the ring of integers in $K$ and $\mathfrak{m}_K$ be its maximal ideal, so that $\F_q$ is the residue field $\mybar K\colonequals  \mathcal{O}_K/\mathfrak{m}_K$ of $K$. Let $x$ and $z$ be uniformizers of the local fields $L$ and $K$, respectively.
Since $L/K$ is totally ramified we have $z = x^n \cdot \sum_{i=0}^{\infty} a_i x^i$ for some $a_i\in \mathcal{O}_K$ with $a_0\notin \mathfrak{m}_K$. We may assume $a_0=1$ by replacing $z$ with $a_0z$ if necessary. Write $n=mp^{\ell}$ for some $\ell\ge 0$ and $m>0$ with $p\nmid m$ where $p\colonequals \charp(\F_q)$.
Since $p\nmid m$ we have $\sum_{i=0}^{\infty} a_i x^i = \Bigl( \sum_{i=0}^{\infty} b_i x^i \Bigr)^m$ for some $b_i\in \mathcal{O}_K$ with $b_0=1$, thus $z = y^m$ where $y\colonequals  x^{p^{\ell}} \cdot \sum_{i=0}^{\infty} b_i x^i$. Write $M\colonequals  K(y)$, so that $[M:K]=m$ and $y$ is an uniformizer of $M$.

We claim that $K$ contains no nontrivial $m$-th root $\zeta$ of unity, which says that the residue field $\F_q$ of $K$ contains no nontrivial $m$-th root $\zeta$ of unity, or equivalently $\gcd(n,q-1)=1$. Suppose otherwise $K$ contains no nontrivial $m$-th root $\zeta$ of unity.
Thus $\zeta$ is a primitive $d$-th root of unity in $K$ for some divisor $d>1$ of $m$. It follows that $M'\colonequals K(y^{m/d})$ is Galois over $K$ of degree $d$. Hence $M'$ is an intermediate field of $L/K$ such that $M'$ is Galois over $K$. Since $L/K$ is exceptional by Lemma~\ref{galois} we know $M'=K$, or equivalently $d=1$, which is a contradiction.
\end{proof}


\section{Two quick proofs of the Carlitz--Wan conjecture}

In this section, first we show Theorem~\ref{gcw} implies the Carlitz--Wan conjecture, and then we give two quick proofs of Theorem~\ref{gcw}.

We will use the following classical result, see \cite{Co,GTZ}.

\begin{lem} \label{basics}
For any $f(X)\in\F_q(X)$ the following are equivalent:
\begin{enumerate}
\item $f(X)$ is exceptional over\/ $\F_q$;
\item irreducible factors of the numerator of $f(X)-f(Y)$ in\/ $\F_q[X,Y]$ which remain irreducible in\/ $\mybar \F_q[X,Y]$ are nonzero constants in\/ $\F_q$ times $X-Y$, where\/ $\mybar\F_q$ is the algebraic closure of\/ $\F_q$.
\end{enumerate}
\end{lem}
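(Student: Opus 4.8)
The plan is to translate the bijectivity of $f$ on $\bP^1(\F_{q^k})$ into a point count on the fibre product $\bP^1\times_{\bP^1}\bP^1$ (with both structure maps equal to $f$), and then apply the Weil bound and the Lang--Weil estimate. Write $f=g/h$ with $g,h\in\F_q[X]$ coprime, put $n=\deg f=\max(\deg g,\deg h)$, and let $\Phi(X,Y)=g(X)h(Y)-g(Y)h(X)$ be the numerator of $f(X)-f(Y)$; it has degree $\le n$ in each variable and is divisible by $X-Y$ since $f(X)-f(X)=0$. For a prime power $Q$, the map $f$ is bijective on $\bP^1(\F_Q)$ iff it is injective there (domain and codomain being finite of size $Q+1$), and since $\sum_\gamma\abs{f^{-1}(\gamma)(\F_Q)}^2\ge\sum_\gamma\abs{f^{-1}(\gamma)(\F_Q)}=Q+1$ with equality exactly when every fibre has at most one $\F_Q$-point, this happens iff $\#\{(\alpha,\beta)\in\bP^1(\F_Q)^2:f(\alpha)=f(\beta)\}=Q+1$. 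Up to an error bounded independently of $Q$ (the pairs in which $\alpha$ or $\beta$ is $\infty$ or a pole of $f$), this count equals $\#V(\Phi)(\F_Q)$ for the plane curve $V(\Phi)\subseteq\mathbb{A}^2$, whose diagonal component $V(X-Y)$ contributes exactly $Q$ points. So everything reduces to counting $\F_Q$-points on the components of $V(\Phi)$ other than the diagonal; note also that the geometrically irreducible components of $\bP^1\times_{\bP^1}\bP^1$ defined over $\F_q$ correspond to the absolutely irreducible factors of $\Phi$ in $\F_q[X,Y]$, with the diagonal corresponding to $X-Y$, so condition~(2) is precisely the assertion that $f\colon\bP^1\to\bP^1$ is an exceptional cover in the sense of Definition~\ref{c-exc}.

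For $\neg(2)\Rightarrow\neg(1)$: if $\Phi$ has an absolutely irreducible factor $\Psi_0\in\F_q[X,Y]$ that is not a scalar multiple of $X-Y$, then $V(\Psi_0)$ is a geometrically irreducible plane curve over $\F_q$ distinct from the diagonal, so the Weil bound applied to its smooth projective model, together with the bounded loss at singular points and at infinity, gives $\#V(\Psi_0)(\F_Q)\ge Q+1-c\sqrt Q$ for a constant $c=c(f)$. Since $V(\Psi_0)\cap V(X-Y)$ is finite, for all sufficiently large $Q$ there is a point $(\alpha,\beta)\in V(\Psi_0)(\F_Q)$ with $\alpha\ne\beta$, i.e.\ $f(\alpha)=f(\beta)$ with $\alpha\ne\beta$; hence $f$ is not injective on $\bP^1(\F_Q)$. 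As this holds for all large $Q=q^k$, $f$ is not exceptional over $\F_q$.

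For $(2)\Rightarrow(1)$: write $\Phi=c(X-Y)^a\prod_j\Psi_j^{b_j}$ with each $\Psi_j$ irreducible over $\F_q$ but reducible over $\overline{\F_q}$, say $\Psi_j=\prod_{t=0}^{d_j-1}W_{j,t}$ over $\F_{q^{d_j}}$ with $d_j\ge 2$ and the $W_{j,t}$ distinct and conjugate under $\Gal(\F_{q^{d_j}}/\F_q)$; set $D=\lcm_j d_j$. When $\gcd(k,D)=1$, a point with coordinates in $\F_{q^k}$ lying on some $W_{j,t}$ is fixed by the $q^k$-power Frobenius and hence also lies on $W_{j,\,t+k}$ (indices mod $d_j$), a distinct conjugate since $d_j\nmid k$; by Bézout it follows that, together with the bounded set of non-diagonal pairs involving $\infty$ or a pole of $f$, the non-diagonal $\F_{q^k}$-points of $V(\Phi)$ lie in a single finite set $\Sigma$ of pairs independent of $k$, and are precisely the pairs in $\Sigma$ that lie off the diagonal and whose field of definition has degree dividing $k$. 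If every such pair has field of definition of degree $\ge 2$, then taking $k$ coprime to $D$ and to all of those degrees makes the non-diagonal count vanish, so $f$ is bijective on $\bP^1(\F_{q^k})$ for infinitely many $k$, which is~(1). The step I expect to be the main obstacle is exactly the hypothesis just invoked: that (2) forces $f$ to be injective on $\bP^1(\F_q)$, equivalently that under (2) no $\F_q$-rational pair $(\alpha_0,\beta_0)$ with $\alpha_0\ne\beta_0$ can lie on a component of the fibre product (such a component being necessarily non-$\F_q$-rational, by (2)). The uniform Bézout/Weil estimate only bounds the non-diagonal count, it does not make it zero; a stray rational point of this kind would be a crossing of at least two branches of the fibre product, forcing $f$ to ramify at both $\alpha_0$ and $\beta_0$, and ruling it out needs the finer group-theoretic description of the Galois closure of $f$ and of the Frobenius action on its components. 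This is precisely the part of the argument carried out with care in \cite{Co,GTZ}, whose treatment I would follow.
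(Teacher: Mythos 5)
The paper does not actually prove Lemma~\ref{basics}: it cites it as classical (``see \cite{Co,GTZ}'') and moves on, so there is no in-paper argument to compare yours against. Your sketch is therefore assessed on its own terms.

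Your direction $\neg(2)\Rightarrow\neg(1)$ is essentially correct: an absolutely irreducible $\F_q$-factor $\Psi_0\neq c(X-Y)$ of $\Phi$ has $\ge Q+1-c\sqrt Q$ points over $\F_Q$ by Weil, only $O(1)$ of them diagonal or at a pole/infinity, so for all large $Q=q^k$ you get a genuine collision and $f$ fails to be injective on $\bP^1(\F_{q^k})$. This kills exceptionality.

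The direction $(2)\Rightarrow(1)$ has a genuine gap, and it is exactly the one you flag at the end. Your Frobenius/B\'ezout argument shows that for $k$ coprime to $D$ the non-diagonal $\F_{q^k}$-points of $V(\Phi)$ are confined to a fixed finite set $\Sigma$ (singular crossings of conjugate geometric branches, plus the bounded set coming from poles and $\infty$). But nothing in that count prevents $\Sigma$ from containing an $\F_q$-rational non-diagonal pair $(\alpha_0,\beta_0)$. If it did, $f(\alpha_0)=f(\beta_0)$ with $\alpha_0,\beta_0\in\bP^1(\F_q)$, and $f$ would fail to be injective on $\bP^1(\F_{q^k})$ for \emph{every} $k$, so $(1)$ would fail even though $(2)$ held. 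Ruling this out is a theorem, not a triviality: it is precisely the assertion that an exceptional cover in the sense of Definition~\ref{c-exc} induces a bijection on $\F_q$-points. A priori, a reducible plane curve can certainly have an $\F_q$-rational singular point lying on several Galois-conjugate geometric branches, so B\'ezout plus Weil cannot exclude it. The standard way to close the gap is the monodromy description used later in this paper (Lemma~\ref{standard}, imported from \cite[Lemmas~4.1 and 4.3]{GTZ}): translate $(2)$ into the statement that the diagonal is the unique common orbit of the arithmetic group $A$ and geometric group $G$ on $\mathcal S\times\mathcal S$, deduce via the Burnside-type count that every $\sigma\in A$ with $A=\langle G,\sigma\rangle$ has exactly one fixed point on $\mathcal S$, and then apply this to a Frobenius element in the decomposition group over any rational place of $Y$ to conclude there is exactly one rational point above it. That argument is not recoverable from the elementary plane-curve count alone. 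So as written, the proposal is a correct outline of the classical proof minus its hardest lemma, and you should either carry out the group-theoretic step yourself (it is short once Lemma~\ref{standard} is available) or frame the whole $(2)\Rightarrow(1)$ direction through Chebotarev on the Galois closure, as in \cite{GTZ}, rather than through the factorization of $\Phi$.
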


Next, we show Theorem~\ref{gcw} has the following consequence.

\begin{cor} \label{rational}
For any exceptional $f(X)\in\F_q(X)$ the ramification index of $f(X)$ at any point in\/ $\bP^1(\F_q)$ is coprime to $(q-1)$.
\end{cor}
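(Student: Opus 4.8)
The plan is to deduce Corollary~\ref{rational} directly from Theorem~\ref{gcw}, by recognizing that an exceptional rational function $f(X)\in\F_q(X)$ is the same data as an exceptional cover $f\colon\bP^1\to\bP^1$, and that a point of $\bP^1(\F_q)$ is precisely an $\F_q$-rational point of the source curve. So essentially all the content is imported from Theorem~\ref{gcw} and Lemma~\ref{basics}, and what remains is a translation between the function-field language and the curve language.

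First I would note that, writing $f(X)=g(X)/h(X)$ with $g,h\in\F_q[X]$ coprime and not both constant, $f$ defines a nonconstant, hence finite, morphism $f\colon\bP^1\to\bP^1$ of curves over $\F_q$, and that the ramification index of this morphism at a point $P\in\bP^1(\F_q)$ is exactly the quantity called ``the ramification index of $f(X)$ at $P$'' in the statement. The crux is then to check that $f\colon\bP^1\to\bP^1$ is an exceptional cover in the sense of Definition~\ref{c-exc}. For this I would identify the fiber product $\bP^1\times_{\bP^1}\bP^1$ (with both maps equal to $f$) with the curve $\{(a,b):f(a)=f(b)\}\subseteq\bP^1\times\bP^1$, observe that since $f$ is nonconstant neither $\{a=\infty\}$ nor $\{b=\infty\}$ is one of its components, and conclude that every component of $\bP^1\times_{\bP^1}\bP^1$ is the closure of a component of the affine plane curve cut out by the numerator of $f(X)-f(Y)$ in $\F_q[X,Y]$. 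Hence the geometrically irreducible components of $\bP^1\times_{\bP^1}\bP^1$ defined over $\F_q$ correspond bijectively to the irreducible factors of that numerator which remain irreducible over $\mybar\F_q$, with the diagonal matched to the factor $X-Y$. By Lemma~\ref{basics}, exceptionality of $f(X)$ in the classical sense forces every such factor to be a nonzero constant in $\F_q$ times $X-Y$; therefore the diagonal is the only geometrically irreducible component of $\bP^1\times_{\bP^1}\bP^1$ defined over $\F_q$, i.e.\ $f\colon\bP^1\to\bP^1$ is an exceptional cover.

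With $f\colon\bP^1\to\bP^1$ identified as an exceptional cover, Theorem~\ref{gcw} applied to it says that the ramification index of $f$ at every point of $\bP^1(\F_q)$ is coprime to $q-1$, which is precisely the assertion of the corollary. I expect the only step requiring any care to be the bookkeeping at the points at infinity in the middle paragraph, namely making sure that passing from the numerator of $f(X)-f(Y)$ to the full fiber product $\bP^1\times_{\bP^1}\bP^1$ neither creates nor destroys geometrically irreducible components defined over $\F_q$; once that is settled, the corollary is an immediate combination of Lemma~\ref{basics} and Theorem~\ref{gcw}.
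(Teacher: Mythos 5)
Your proposal is correct and follows exactly the same route as the paper: deduce the corollary from Theorem~\ref{gcw} by observing that $f$ induces a finite morphism $\bP^1\to\bP^1$ and that Lemma~\ref{basics} identifies classical exceptionality of $f(X)$ with $f\colon\bP^1\to\bP^1$ being an exceptional cover. The paper's proof is a single short paragraph that simply cites Lemma~\ref{basics} for this identification and does not spell out the fiber-product bookkeeping; your extra verification that no component of $\bP^1\times_{\bP^1}\bP^1$ lies at infinity (which holds since all components dominate both factors, hence are $1$-dimensional and not contained in a fiber of a projection) is correct and merely makes explicit what the paper treats as standard.
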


\begin{proof}
Suppose $f(X)\in\F_q(X)$ is nonconstant, so that it induces a finite morphism $f\colon \bP^1\to \bP^1$ of curves. By Lemma~\ref{basics} the exceptionality of $f(X)$ says that $f\colon \bP^1\to \bP^1$ is an exceptional cover. By Theorem~\ref{gcw} it follows that the ramification index of $f(X)$ at any point in $\bP^1(\F_q)$ is coprime to $(q-1)$. This concludes the proof.
\end{proof}

Clearly, the Carlitz--Wan conjecture is Corollary~\ref{rational} in the special case of polynomials. Thus it remains only to show Theorem~\ref{gcw}.

In light of Theorem~\ref{local}, the proof of Theorem~\ref{gcw} is reduced to the following result, which says that the exceptionality of function fields implies the exceptionality of local fields at any rational place.

\begin{thm} \label{gtol}
Assume $f\colon X \to Y$ is a finite separable morphism of curves over\/ $\F_q$. Suppose $f\colon X\to Y$ is an exceptional cover. Then, for any\/ $\F_q$-point $u\in X(\F_q)$, the extension\/ $\F_q(X)_u/\F_q(Y)_v$ of local fields is exceptional, where\/ $\F_q(X)_u$ and\/ $\F_q(Y)_v$ are the completions of functions fields\/ $\F_q(X)$ and\/ $\F_q(Y)$ at the places $u$ and $v\colonequals f(u)$, respectively.
\end{thm}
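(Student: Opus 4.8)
The plan is to verify directly the two conditions in the definition of an exceptional local field extension, writing $K\colonequals\F_q(Y)_v$ and $L\colonequals\F_q(X)_u$. Since $u\in X(\F_q)$ and $v=f(u)$ both have residue field $\F_q$, the extension $L/K$ is finite and separable (because $f$ is) with residue degree $1$; hence $L/K$ is totally ramified and condition~(1) holds immediately. For condition~(2), let $N$ be the Galois closure of $L/K$ and use the standard decomposition $L\otimes_K L\cong\prod_\sigma L.\sigma(L)$, where $\sigma$ runs over representatives of the double cosets $\Gal(N/L)\backslash\Gal(N/K)/\Gal(N/L)$ and $L.\sigma(L)$ denotes the compositum inside $N$: this identifies the field factors of $L\otimes_K L$ with the fields $L.\sigma(L)$, the trivial double coset giving the ``diagonal factor'' $L$ (the one through which the multiplication map $L\otimes_K L\to L$ factors), and a factor $L.\sigma(L)$ is totally ramified over $K$ exactly when its residue field is $\F_q$. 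So condition~(2) is equivalent to the assertion that every field factor of $L\otimes_K L$ other than the diagonal factor has residue field strictly larger than $\F_q$.

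Next I would read off those field factors from the geometry of $X\times_Y X$. Put $F\colonequals\F_q(Y)$ and $E\colonequals\F_q(X)$. Since $f$ is separable, $E\otimes_F E$ is reduced, $E\otimes_F E=\prod_j Z_j$, where the $Z_j$ are the function fields of the irreducible components $C_j$ of $X\times_Y X$ defined over $\F_q$, with $Z_0=E$ the diagonal, so that multiplication $E\otimes_F E\to E$ is the projection onto $Z_0$. Base-changing along $F\to K$, and using separability to keep everything reduced together with the standard splitting $Z_j\otimes_F K\cong\prod_{z\mid v}(Z_j)_z$, one gets two factorizations of the same finite-dimensional reduced $K$-algebra into a product of fields,
\[
\prod_j\prod_{z\mid v}(Z_j)_z\;\cong\;(E\otimes_F E)\otimes_F K\;\cong\;\prod_{u'\mid v}\prod_{u''\mid v}\F_q(X)_{u'}\otimes_K\F_q(X)_{u''},
\]
which therefore consist of the same field factors. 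Matching them — and using that multiplication $E\otimes_F E\to E$ base-changes to componentwise multiplication, so it vanishes on the cross terms $u'\neq u''$ and restricts to $\F_q(X)_{u'}\otimes_K\F_q(X)_{u'}\to\F_q(X)_{u'}$ on the diagonal terms — shows that the field factors of $L\otimes_K L=\F_q(X)_u\otimes_K\F_q(X)_u$ are precisely the completions $(Z_j)_z$ at places $z\mid v$ of $Z_j$ with $\pi_1(z)=\pi_2(z)=u$ (here $\pi_1,\pi_2\colon C_j\to X$ are the two projections), and the diagonal factor is the one coming from $(Z_0,u)$, which is just $(Z_0)_u=\F_q(X)_u=L$.

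Now the exceptionality of $f$ enters. By hypothesis $C_0$ is the only component of $X\times_Y X$ defined over $\F_q$ that is geometrically irreducible, so for $j\neq 0$ the constant field of $Z_j$ is some $\F_{q^{d_j}}$ with $d_j\geq 2$; hence the residue field of every place $z$ of $Z_j$ contains $\F_{q^{d_j}}$, and so $(Z_j)_z$ has residue field strictly larger than $\F_q$. Every non-diagonal field factor of $L\otimes_K L$ arises as such a $(Z_j)_z$ with $j\neq 0$, so it is not totally ramified over $K$; by the reformulation in the first paragraph, condition~(2) holds and $L/K$ is exceptional.

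I expect the crux to be the middle step: showing that the two base-changed decompositions of $(E\otimes_F E)\otimes_F K$ into products of fields literally coincide, and pinning down which factor is the diagonal one — this is where separability (to keep everything reduced and étale), the behaviour of the multiplication map under base change, and the fact that $X$ is geometrically irreducible (so that $Z_0$ contributes the factor $\F_q(X)_u$ exactly once) all get used. Once this dictionary between pairs $(\text{component }C_j,\ \text{place }z\mid v)$ and field factors of the $\F_q(X)_{u'}\otimes_K\F_q(X)_{u''}$ is in place, the rest — translating condition~(2) into a statement about field factors, and computing residue fields of the $(Z_j)_z$ — is routine.
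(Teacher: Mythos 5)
Your proof is correct and essentially coincides with the paper's first proof of Theorem~\ref{gtol}: both identify the field factors of $\F_q(X)_u\otimes_{\F_q(Y)_v}\F_q(X)_u$ with completions of the function fields $Z_j$ of the $\F_q$-components of $X\times_Y X$, and both use the exceptionality hypothesis to see that each non-diagonal $Z_j$ has constant field strictly larger than $\F_q$, so its completions cannot be totally ramified over $\F_q(Y)_v$. The differences are cosmetic bookkeeping: the paper computes $\F_q(X)\otimes_{\F_q(Y)}\F_q(X)_u$ in two ways (via $\F_q(X)\otimes_{\F_q(Y)}\F_q(X)$ and via $\F_q(X)\otimes_{\F_q(Y)}\F_q(Y)_v$) whereas you compute $(\F_q(X)\otimes_{\F_q(Y)}\F_q(X))\otimes_{\F_q(Y)}\F_q(Y)_v$ in two ways, and you make explicit the double-coset decomposition $L\otimes_K L\cong\prod_\sigma L.\sigma(L)$ that translates condition~(2) of exceptionality into a statement about field factors, a translation the paper uses but leaves implicit.
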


\begin{proof}[Proof of Theorem~\ref{gcw}]
We may assume that the morphism $f\colon X \to Y$ of curves is finite separable. Let $u\in X(\F_q)$ be an $\F_q$-point of $X$, so that $v\colonequals f(u)\in Y(\F_q)$ is an $\F_q$-point of $Y$. By Theorem~\ref{gtol} we know the extension $\F_q(X)_u/\F_q(Y)_v$ of local fields is exceptional, where $\F_q(X)_u$ and $\F_q(Y)_v$ are the completions of $\F_q(X)$ and $\F_q(Y)$ at the places $u$ and $v$, respectively. By Theorem~\ref{local} it follows that $[\F_q(X)_u:\F_q(Y)_v]$ is coprime to $q-1$. This concludes the proof since $[\F_q(X)_u:\F_q(Y)_v]$ is equal to the ramification index of $f$ at $u$.
\end{proof}

Next, we give two different proofs of Theorem~\ref{gtol} as follows.


\subsection{The first proof}

Our first proof of Theorem~\ref{gtol} argues in terms of tensor products of function fields and their completions.

\begin{proof}[The first proof of Theorem~\ref{gtol}]
For ease of expression, let $L\colonequals \F_q(X)$ and $K\colonequals \F_q(Y)$ be the function fields of $X$ and $Y$, respectively. Thus $L/K$ is a finite separable extension of function fields over $\F_q$. Thus $L\otimes_K L$ is isomorphic as $K$-algebras to $\prod_{i=1}^r L_i$, where $L_i$ are finite separable extension fields over $K$ with $L_1=L$ which corresponds to the diagonal of the fiber product $X\times_Y X$.
The assumption that $f\colon X\to Y$ is an exceptional cover says that $L_1$ is the only geometrically irreducible among $L_i$ as function fields over $\F_q$.
We claim that any function field over $\F_q$ which has one rational place is geometrically irreducible. Indeed, this is equivalent to that any nonsingular projective curve $C$ over $\F_q$ which has one $\F_q$-point $P$ is geometrically irreducible. Suppose otherwise $C$ is not geometrically irreducible. Since the point $P$ is on the curve $C$, we know that $P$ is on one geometrically irreducible component of $C$, which by the $q$-th power map implies that $P$ is on each geometrically irreducible component of $C$. Since $C$ has more than one geometrically irreducible components, it follows that $P$ is a singular point on $C$, which contradicts with the assumption that $C$ is nonsingular.
Hence each function field $L_i$ with $i>1$ has no $\F_q$-rational places since it is not geometrically irreducible. Moreover, each $L_i$ can be viewed as a finite separable extension over $L$ via the right $L$-module structure of $L\otimes_K L$.
Let $L_u$ and $K_v$ be the completions of $L$ and $K$ at places $u$ and $v$, respectively. Since both $u$ and $v$ are $\F_q$-rational, the extension $L_u/K_v$ is a totally ramified extension of local fields with $\F_q$ as residue fields. Moreover, we have $L\otimes_K K_v = \prod_{j=1}^s L_{u_j} = L_u \times \prod_{j=2}^s L_{u_j}$, where $u_1,\dots,u_s$ are all places of $L$ lying over $v$ with $u_1=u$.

Let us compute the tensor product $L\otimes_K L_u$ in two different ways. On the one hand, by viewing the $K$-algebra structure of $L_u$ via the composition $K\to L\to L_u$, we can compute
\begin{align*}
L\otimes_K L_u & \cong L\otimes_K \bigl( L\otimes_L L_u \bigr) \\
& \cong \bigl( L\otimes_K L \bigr) \otimes_L L_u  \\
& \cong \Bigl( \prod_{i=1}^r L_i \Bigr) \otimes_L L_u \\
& \cong  \prod_{i=1}^r \bigl( L_i\otimes_L L_u \bigr) \\
& \cong L_u \times \prod_{i=2}^r \bigl( L_i\otimes_L L_u \bigr).
\end{align*}
Since each $L_i$ with $i\ge 2$ has no $\F_q$-rational places, it follows that $\prod_{i=2}^r \bigl( L_i\otimes_L L_u \bigr)$ is a product of local fields whose residue fields are proper extensions over $\F_q$. Thus the local field $L_u$ is the only factor of the tensor product $L\otimes_K L_u$ whose residue field is $\F_q$. On the other hand, by viewing the $K$-algebra structure of $L_u$ via the composition $K\to K_v\to L_u$, we can compute
\begin{align*}
L\otimes_K L_u & \cong L\otimes_K \bigl( K_v\otimes_{K_v} L_u \bigr) \\
& \cong \bigl( L\otimes_K K_v \bigr) \otimes_{K_v} L_u \\
& \cong \Bigl( \prod_{j=1}^s L_{u_j} \Bigr) \otimes_{K_v} L_u \\
& \cong \prod_{j=1}^s \bigl( L_{u_j}\otimes_{K_v} L_u \bigr) \\
& \cong \bigl( L_u\otimes_{K_v} L_u \bigr) \times \prod_{j=2}^s \bigl( L_{u_j}\otimes_{K_v} L_u \bigr).
\end{align*}
By comparing the above two expressions of $L\otimes_K L_u$ it follows that the local field $L_u$ is the only factor of $L_u\otimes_{K_v} L_u$ whose residue field is $\F_q$. Therefore, the finite separable extension $L_u/K_v$ of local fields is totally ramified, while the compositum of any two distinct conjugates of $L_u$ are not totally ramified over $K_v$. Thus $L_u/K_v$ is an exceptional extension of local fields, which concludes the proof.
\end{proof}


\subsection{The second proof}

Our second proof of Theorem~\ref{gtol} uses some basic properties on Galois groups of extensions of function fields.

For ease of expression we introduce the following notion, which is the motivating instance of the general notion of exceptional field extensions introduced in Lenstra's 1995 talk at Glasgow.

\begin{defn}
Assume $L/K$ is a finite separable extension such that $\F_q\subseteq K$. We say $L/K$ has the \emph{Lenstra-property} if the following hold where $N$ is the Galois closure of $L/K$:
\begin{enumerate}
\item $\F_q$ is algebraically closed in $L$;
\item $\F_q$ is not algebraically closed in the compositum $L.\sigma(L)$ of $L$ and $\sigma(L)$ for any $\sigma \in \Gal(N/K) \setminus \Gal(N/L)$.
\end{enumerate}
\end{defn}

Our second proof of Theorem~\ref{gtol} proceeds in two steps. First, we show exceptional extensions of function fields have the Lenstra-property.

\begin{lem} \label{lenstra}
Assume $f\colon X \to Y$ is a finite separable morphism of curves over\/ $\F_q$. Suppose $f\colon X\to Y$ is an exceptional cover. Then the function field extension\/ $\F_q(X)/\F_q(Y)$ has the Lenstra-property.
\end{lem}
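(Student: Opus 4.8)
The plan is to verify directly the two defining conditions of the Lenstra-property, using the standard dictionary between the $\F_q$-irreducible components of the fiber product $X\times_Y X$ and the factors of a tensor product of function fields. Write $L\colonequals\F_q(X)$ and $K\colonequals\F_q(Y)$, so that $L/K$ is a finite separable extension of function fields over $\F_q$; fix a primitive element $\alpha$ of $L/K$, let $g(T)\in K[T]$ be its minimal polynomial, and let $N$ be the splitting field of $g$ over $K$, which is the Galois closure of $L/K$. Condition~(1) of the Lenstra-property is immediate: since $X$ is a curve it is geometrically integral over $\F_q$, so $\F_q$ is algebraically closed in its function field $L$.

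For condition~(2) I would first recall the decomposition coming from separability: $L\otimes_K L\cong\prod_{i=1}^r L_i$ as $K$-algebras, where each $L_i$ is a finite separable field extension of $K$, and $L_1=L$ corresponds to the diagonal of the fiber product $X\times_Y X$. Using the identification $L\otimes_K L\cong L[T]/(g(T))$ sending $\ell\otimes 1\mapsto\ell$ and $1\otimes\alpha\mapsto T$, the factors $L_i$ correspond to the monic irreducible factors $g_i$ of $g$ over $L$ via $L_i\cong L[T]/(g_i(T))$, and $L_1=L$, the diagonal factor, is the one attached to the linear factor $T-\alpha$. Each $L_i$ is the function field of an $\F_q$-irreducible component of $X\times_Y X$, the component attached to $L_1$ being the diagonal; and such a component is geometrically irreducible precisely when $\F_q$ is algebraically closed in the corresponding function field. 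Hence the hypothesis that $f$ is an exceptional cover --- that the diagonal is the only geometrically irreducible component of $X\times_Y X$ defined over $\F_q$ --- translates into the assertion that $\F_q$ is not algebraically closed in $L_i$ for every $i\ge 2$.

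It remains to identify the composita $L.\sigma(L)$ with the factors $L_i$. For $\sigma\in\Gal(N/K)$ we have $\sigma(L)=K(\sigma(\alpha))$, so $L.\sigma(L)=L(\sigma(\alpha))$, where $\sigma(\alpha)\in N$ is a root of $g$; since $g$ is separable, $\sigma(\alpha)$ is a root of exactly one factor $g_{i(\sigma)}$. The substitution $T\mapsto\sigma(\alpha)$ then induces an isomorphism of $\F_q$-algebras $L[T]/(g_{i(\sigma)}(T))\cong L(\sigma(\alpha))$, so $L.\sigma(L)\cong L_{i(\sigma)}$ as $\F_q$-algebras; in particular $\F_q$ is algebraically closed in $L.\sigma(L)$ if and only if it is algebraically closed in $L_{i(\sigma)}$. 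Finally, $i(\sigma)=1$ means precisely that $g_{i(\sigma)}=T-\alpha$, i.e.\ that $\sigma(\alpha)=\alpha$, i.e.\ that $\sigma\in\Gal(N/L)$. Thus for every $\sigma\in\Gal(N/K)\setminus\Gal(N/L)$ we get $i(\sigma)\ge 2$, and hence $\F_q$ is not algebraically closed in $L_{i(\sigma)}\cong L.\sigma(L)$, which is condition~(2).

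The content here is the translation dictionary rather than any hard estimate, so there is no genuine difficulty; the one place to be careful is to check that the identifications $L.\sigma(L)\cong L_{i(\sigma)}$ are isomorphisms of $\F_q$-algebras (not merely of abstract fields), so that the field of constants is genuinely transported, and likewise that the diagonal factor $L_1$ is the one attached to $T-\alpha$ and to the diagonal component of $X\times_Y X$. I expect this bookkeeping to be the main obstacle.
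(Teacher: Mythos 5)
Your proof is correct, but it takes a genuinely different route from the paper's proof of this lemma.

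The paper's proof works group-theoretically: it invokes the orbit characterization of exceptionality (that $\{A_1\}$ is the only common orbit of $A_1$ and $G_1$ on the coset space $\mathcal{S}$, essentially Lemma~\ref{standard} though it is not cited here), sets $H := \Gal(\Omega/\F_q(X).\sigma(\F_q(X)))$, projects $H$ through the cyclic quotient $A_1/G_1\cong\Gal(\F_{q^\ell}/\F_q)$, picks $\tau\in H$ whose image generates the projection, and observes that $\tau$ fixes the two distinct cosets $A_1$ and $\sigma A_1$, forcing the projection of $H$ to be a proper subgroup $\Gal(\F_{q^\ell}/\F_{q^m})$ with $m>1$, so $\F_{q^m}\subseteq\F_q(X).\sigma(\F_q(X))$. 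Your argument instead goes directly from the geometric definition of exceptional cover via the tensor-product decomposition $L\otimes_K L\cong\prod_{i=1}^r L_i$, identifies each $L.\sigma(L)$ with some $L_{i(\sigma)}$ via the factorization of the minimal polynomial $g$ over $L$, pins down the diagonal factor as the one coming from $T-\alpha$, and transports the failure of $\F_q$ to be algebraically closed along an explicit $\F_q$-algebra isomorphism. The one point you flag as needing care --- that the identification $L[T]/(g_{i(\sigma)})\cong L(\sigma(\alpha))$ is an isomorphism of $\F_q$-algebras --- is fine: it is even an $L$-algebra isomorphism, a fortiori $\F_q$-linear, so the field of constants is transported. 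Interestingly, your method is essentially the same dictionary used in the paper's \emph{first} proof of Theorem~\ref{gtol}, which argues via tensor products all the way down to the completions; what you have done is isolate the function-field part of that argument and package it as a proof of the intermediate Lemma~\ref{lenstra}, where the paper's second proof prefers to stay inside Galois theory. Both work; yours is arguably more elementary in that it never needs the cyclic-quotient/Frobenius-generator argument, while the paper's version is phrased in the group-theoretic language that recurs throughout Sections~4 and~5.
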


\begin{proof}
Let $\Omega$ be the Galois closure of the finite separable extension $\F_q(X)/\F_q(Y)$ of function fields over $\F_q$, and let $\F_{q^{\ell}}$ be the algebraically closure of $\F_q$ in $\Omega$.
Write $A\colonequals \Gal(\Omega/\F_q(Y))$, $G\colonequals \Gal(\Omega/\F_{q^{\ell}}(Y))$, $A_1\colonequals \Gal(\Omega/\F_q(X))$, $G_1\colonequals \Gal(\Omega/\F_{q^{\ell}}(X))$, so that $A$ acts naturally on the set $\mathcal{S}$ of the left cosets of $A_1$ in $A$.
Moreover, $G_1$ is a normal subgroup of $A_1$ and the quotient $A_1/G_1$ is canonically isomorphic to $\Gal(\F_{q^{\ell}}/\F_q)$. The exceptionality of $f\colon X\to Y$ says that $\{A_1\}$ is the only common orbit of $\mathcal{S}$ under the actions by $A_1$ and $G_1$.

Note that $\F_q$ is algebraically closed in $\F_q(X)$ since $X$ is geometrically integral over $\F_q$. Suppose $\sigma \in \Gal(\Omega/\F_q(Y)) \setminus \Gal(\Omega/\F_q(X))$. It is enough to show that $\F_q$ is not algebraically closed in the compositum $\F_q(X).\sigma(\F_q(X))$.
Write $H\colonequals \Gal(\Omega/\F_q(X).\sigma(\F_q(X)))$, so that $H$ is a subgroup of $A_1$. Since $\Gal(\F_{q^{\ell}}/\F_q)$ is cyclic, the image $\rho(H)$ of $H$ under the composition $\rho\colon A_1 \to A_1/G_1 \to \Gal(\F_{q^{\ell}}/\F_q)$ is cyclic. Pick $\tau\in H$ such that $\rho(\tau)$ is a generator of $\rho(H)$.
Thus $\tau\in A_1$ and $\tau$ fixes two distinct elements $A_1$ and $\sigma A_1$ in $\mathcal{S}$. Since $\{A_1\}$ is the only common orbit of $\mathcal{S}$ under the actions by $A_1$ and $G_1$, it follows that the image $\tau$ in $A_1/G_1$ cannot be a generator of $A_1/G_1$, or equivalently $\rho(H) = \Gal(\F_{q^{\ell}}/\F_{q^m})$ for some divisor $m>1$ of $\ell$.
It follows that $\F_{q^m}$ is contained in $\F_q(X).\sigma(\F_q(X))$, which implies that $\F_q$ is not algebraically closed in $\F_q(X).\sigma(\F_q(X))$. It concludes the proof.
\end{proof}

Next, we show the Lenstra-property is preserved by the completion.

\begin{lem} \label{lenstra2}
Assume $L/K$ is a finite separable extension of function fields over\/ $\F_q$. Suppose $L/K$ satisfies the Lenstra-property and $u$ is any\/ $\F_q$-rational place of $L$. Let $v$ be the place of $K$ lying under $u$, and let $L_u$ and $K_v$ be the completions of $L$ and $K$ at $u$ and $v$, respectively. Then the extension $L_u/K_v$ of completions has the Lenstra-property, or equivalently $L_u/K_v$ is an exceptional extension of local fields.
\end{lem}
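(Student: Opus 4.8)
The plan is to check the two conditions defining the Lenstra-property for $L_u/K_v$ directly, deducing each from the corresponding condition for $L/K$. Fix an algebraic closure $\overline{K_v}$ of $K_v$, let $\overline K$ be the algebraic closure of $K$ inside it, and regard $L\subseteq L_u\subseteq\overline{K_v}$; let $N\subseteq\overline K$ be the Galois closure of $L/K$ and $N_u\subseteq\overline{K_v}$ the Galois closure of $L_u/K_v$.

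Condition (1) is immediate. Because $u$ is $\F_q$-rational, $L_u$ is a complete discretely valued field with residue field $\F_q$, so $L_u\cong\F_q((t))$; any subfield of $\F_q((t))$ that is finite over $\F_q$ injects into the residue field $\F_q$, so $\F_q$ is algebraically closed in $L_u$. The same remark, applied to an arbitrary finite extension $F/K_v$, shows that ``$\F_q$ is algebraically closed in $F$'' is equivalent to ``$F/K_v$ is totally ramified''; this is exactly why the Lenstra-property coincides with exceptionality for extensions of $K_v$, so it suffices to establish the former.

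For condition (2) the key structural remark is that $L_u=K_v(L)$, because $K_v(L)$ is finite over $K_v$ (as $L/K$ is finite), hence complete, hence closed in $L_u$, and it contains the dense subfield $L$. Consequently every $K_v$-algebra embedding $\sigma\colon L_u\to\overline{K_v}$ is determined by its restriction $\psi\colonequals\sigma\vert_L\colon L\to\overline K$, and $\sigma$ is the inclusion if and only if $\psi$ is. Now let $\sigma\in\Gal(N_u/K_v)\setminus\Gal(N_u/L_u)$ be arbitrary; then $\sigma\vert_{L_u}$ is a non-identity $K_v$-embedding, so $\psi\neq\iota$, where $\iota\colon L\hookrightarrow\overline K$ is the fixed inclusion. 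Since $\psi(L)$ is a $K$-conjugate of $L$ it lies in $N$, and since $N/K$ is finite Galois we may extend $\psi$ to some $\psi'\in\Gal(N/K)$; then $\psi'\notin\Gal(N/L)$ and $\psi'(L)=\psi(L)$. Condition (2) of the Lenstra-property for $L/K$ now yields that $\F_q$ is not algebraically closed in $E\colonequals L\cdot\psi'(L)=\iota(L)\cdot\psi(L)$; that is, the exact constant field of the function field $E$ is $\F_{q^{k}}$ for some $k\geq 2$, so $\F_{q^{k}}\subseteq E$.

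It remains to descend this to $L_u$. Since $\sigma$ fixes $K_v$ pointwise, $\sigma(L_u)=\sigma(L)\cdot\sigma(K_v)=\psi(L)\cdot K_v$, and therefore
\[
L_u\cdot\sigma(L_u)=\bigl(\iota(L)\cdot K_v\bigr)\cdot\bigl(\psi(L)\cdot K_v\bigr)=E\cdot K_v\supseteq E\supseteq\F_{q^{k}},
\]
so $\F_q$ is not algebraically closed in $L_u\cdot\sigma(L_u)$, which is condition (2). I do not expect a serious obstacle here: the only genuinely delicate point is to arrange the compatible algebraic closures carefully enough that $L_u=K_v(L)$, the identification $\psi=\sigma\vert_L$, and the extension of $\psi$ to $\Gal(N/K)$ can all be invoked simultaneously. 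With that bookkeeping in place, the implication is driven entirely by the elementary observation that an enlarged constant field $\F_{q^{k}}\subseteq E$ persists into the compositum $E\cdot K_v$.
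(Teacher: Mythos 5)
Your proposal is correct and follows the same route as the paper: condition (1) comes from the $\F_q$-rationality of $u$; condition (2) comes from the identity $L_u\cdot\sigma(L_u)=K_v\cdot\bigl(L\cdot\tau(L)\bigr)$ (your $\tau=\psi'$) together with the observation that the enlarged constant field of $L\cdot\tau(L)$ survives in the compositum with $K_v$. You simply spell out the bookkeeping (compatible algebraic closures, $L_u=K_v(L)$, restriction and extension of embeddings) that the paper leaves implicit, which is a sound and careful filling-in rather than a genuinely different argument.
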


\begin{proof}
Since $u$ is $\F_q$-rational, $\F_q$ is algebraically closed in $L_u$. Let $N$ be the Galois closure of $L/K$, and let $\widehat N$ be the Galois closure of $L_u/K_v$.
For any $\sigma \in \Gal(\widehat N/K_v) \setminus \Gal(\widehat N/L_u)$ we have $L_u.\sigma(L_u) = K_v.(L.\tau(L))$ for some $\tau \in \Gal(N/K) \setminus \Gal(N/L)$, so that $\F_q$ is not algebraically closed in $L_u.\sigma(L_u)$.
Thus $L_u/K_v$ has the Lenstra-property. It is easy to show that $L_u/K_v$ has the Lenstra-property if and only if $L_u/K_v$ is an exceptional extension of local fields. This concludes the proof.
\end{proof}

Theorem~\ref{gtol} follows from the combination of Lemmas~\ref{lenstra} and \ref{lenstra2}.

\begin{rmk}
The converse of Lemma~\ref{lenstra} is also true, which can be shown similarly. But it is not needed in this proof of Theorem~\ref{gcw}.
\end{rmk}


\section{Theory of exceptionality for local fields}

Suppose $L/K$ is a finite separable extension of local fields. Let $N$ be the Galois closure of $L/K$. For any intermediate field $M$ of $N/K$, let $M'$ be the maximal unramified extension over $M$ inside $N$, and let $\mybar M$ be the residue field of $M$.
In other words, $\Gal(N/M')$ is the kernel of the canonical surjective homomorphism $\Gal(N/M) \to \Gal(\mybar N / \mybar M)$. Equivalently, $M'$ is obtained from $M$ by adjoining all roots of unity in $N$ of order coprime to the characteristic of the residue field $\mybar N$.

\begin{defn}
The groups $A\colonequals \Gal(N/K)$ and $G\colonequals \Gal(N/K')$ are called the \emph{arithmetic monodromy group} and the \emph{geometric monodromy group} of the finite separable extension $L/K$ of local fields, respectively.
\end{defn}

Write $A_1\colonequals \Gal(N/L)$ and $G_1\colonequals \Gal(N/L')$, so that $A_1$ and $G_1$ are subgroups of $A$ and $G$, respectively. Moreover, $G$ is a normal subgroup of $A$ and the quotient $A/G$ is canonically isomorphic to $\Gal(\mybar N/\mybar K)$. Similarly, $G_1$ is a normal subgroup of $A_1$ and the quotient $A_1/G_1$ is canonically isomorphic to $\Gal(\mybar N/\mybar L)$. In particular, both $A/G$ and $A_1/G_1$ are finite cyclic groups.

Let $\mathcal{S}\colonequals  A/A_1$ be the set of all left cosets of $A_1$ in $A$, so that $A$ acts transitively on $\mathcal{S}$. There is a canonical bijection between $\mathcal{S}$ and the set $\Hom_K(L,N)$ of all $K$-algebra homomorphisms from $L$ to $N$. Since $G\cap A_1 = G_1$ we have the embedding $A_1/G_1 \to A/G$ as groups and the embedding $G/G_1 \to A/A_1$ as sets of left cosets. Moreover, we have

\begin{lem} \label{t-ram}
Suppose $L/K$ is a finite separable extension of local fields. Then the following statements are equivalent:
\begin{enumerate}
\item $L/K$ is totally ramified;
\item $[L:K] = [L':K']$;
\item $[L':L] = [K':K]$;
\item the embedding $A_1/G_1 \to A/G$ is an isomorphism;
\item the embedding $G/G_1 \to A/A_1$ is a bijection;
\item $G$ acts transitively on $\mathcal{S}$.
\end{enumerate}
\end{lem}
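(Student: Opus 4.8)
The plan is to reduce all six conditions to the single statement $\mybar L=\mybar K$, which is exactly what ``$L/K$ totally ramified'' means (since $\mybar K\subseteq\mybar L\subseteq\mybar N$). Everything then follows from the elementary dictionary already set up before the lemma: from $A=\Gal(N/K)$, $G=\Gal(N/K')$, $A_1=\Gal(N/L)$, $G_1=\Gal(N/L')$ one reads off $[K':K]=|A/G|$, $[L':L]=|A_1/G_1|$, $[L:K]=|A/A_1|$ and $[L':K']=|G/G_1|$, while $A/G\cong\Gal(\mybar N/\mybar K)$ and $A_1/G_1\cong\Gal(\mybar N/\mybar L)$ give $|A/G|=[\mybar N:\mybar K]$ and $|A_1/G_1|=[\mybar N:\mybar L]$. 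Two consequences are the workhorses. First, the embedding $A_1/G_1\hookrightarrow A/G$ of finite groups forces $|A_1/G_1|\le|A/G|$, with equality if and only if $\mybar L=\mybar K$. Second, $|A|=|A/G|\,|G|$ and $|A_1|=|A_1/G_1|\,|G_1|$ combine into the identity $[L:K]=[\mybar L:\mybar K]\cdot[L':K']$ (all factors being positive integers).

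Granting this bookkeeping, each equivalence is one line, and I would route every implication through condition $(1)$ (or $(2)$) rather than proving a long cycle. $(1)\Leftrightarrow(4)$: an injective homomorphism of finite groups is an isomorphism exactly when the orders agree, i.e. $|A_1/G_1|=|A/G|$, i.e. $\mybar L=\mybar K$. $(1)\Leftrightarrow(3)$: condition $(3)$ reads $[\mybar N:\mybar L]=[\mybar N:\mybar K]$, again equivalent to $\mybar L=\mybar K$. $(1)\Leftrightarrow(2)$: in the identity $[L:K]=[\mybar L:\mybar K]\cdot[L':K']$ all factors are positive integers with $[L':K']\ge1$, so $[L:K]=[L':K']$ holds precisely when $[\mybar L:\mybar K]=1$. $(2)\Leftrightarrow(5)$: the map $G/G_1\to A/A_1$, $gG_1\mapsto gA_1$, is always injective because $G\cap A_1=G_1$, so it is a bijection iff $|G/G_1|=|A/A_1|$, i.e. iff $[L':K']=[L:K]$. $(5)\Leftrightarrow(6)$: the $G$-orbit of the base coset $A_1\in\mathcal{S}$ has stabilizer $G\cap A_1=G_1$ in $G$, hence is precisely the image of the map $G/G_1\to A/A_1$ of $(5)$; so $G$ acts transitively on $\mathcal{S}$ exactly when that map is surjective.

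I do not expect a genuine obstacle here: the content is entirely in correctly assembling the standard correspondence between subfields of $N$, their residue fields, and subgroups of $A$, most of which the excerpt has already laid out (including $G\cap A_1=G_1$ and the two embeddings). The only steps that are not pure symbol-pushing are the two workhorse facts in the first paragraph — the embedding $A_1/G_1\hookrightarrow A/G$ being an equality iff $L/K$ is totally ramified, and the degree identity $[L:K]=[\mybar L:\mybar K]\,[L':K']$ — together with the orbit–stabilizer identification used for $(5)\Leftrightarrow(6)$, where one uses that the stabilizer of $A_1$ in $G$ is $G\cap A_1=G_1$. I would state those three points carefully and treat the remaining implications as immediate counting arguments.
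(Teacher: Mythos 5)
Your proposal is correct and follows essentially the same route as the paper's proof: both establish $(1)\Leftrightarrow(2)\Leftrightarrow(3)$ via the degree identity $[L:K]=[\mybar L:\mybar K]\cdot[L':K']$ (which the paper phrases in terms of $e$ and $f$ together with $e(L'/L)=e(K'/K)=1$ and $f(L'/K')=1$), then translate $(2)$ and $(3)$ into index statements about the embeddings $A_1/G_1\hookrightarrow A/G$ and $G/G_1\hookrightarrow A/A_1$ via the Galois correspondence, and finally identify the image of $G/G_1\hookrightarrow A/A_1$ with the $G$-orbit of the base coset to get $(5)\Leftrightarrow(6)$. The only difference is organizational — you route every implication through the hub statement $\mybar L=\mybar K$, whereas the paper chains $(1)\Leftrightarrow(2)\Leftrightarrow(3)$ first and then tacks on $(4)$, $(5)$, $(6)$ — but this is a presentational choice, not a different argument.
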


\begin{proof}
For any two intermediate fields $M_1$ and $M_2$ of $N/K$ such that $M_1\subseteq M_2$, let $e(M_2/M_1)$ and $f(M_2/M_1)$ be the ramification index and the residue index of the extension $M_2/M_1$ of local fields, respectively. Since $e(L'/L)=1=e(K'/K)$ we have $e(L/K)=e(L'/K')$.
Since $f(L'/K')=1$ it follows that $L/K$ is totally ramified if and only if $[L:K] = [L':K']$, or equivalently $[L':L] = [K':K]$. Moreover, by Galois theory we have $[K':K] = [A:G]$, and $[L':L] = [A_1:G_1]$, $[L:K] = [A:A_1]$, and $[L':K'] = [G:G_1]$. Thus $[L':L] = [K':K]$ says that the embedding $A_1/G_1 \to A/G$ is an isomorphism as groups.
Similarly, $[L:K] = [L':K']$ says that the embedding $G/G_1 \to A/A_1$ is a bijection as sets of left cosets. Finally, since $G/G_1$ is one $G$-orbit in $A/A_1$ it follows that the embedding $G/G_1 \to A/A_1$ is a bijection if and only if $G$ acts also transitively on $\mathcal{S}\colonequals A/A_1$.
\end{proof}

Henceforth we assume in addition that $L/K$ is totally ramified, so that we have the triple $(A,G,\mathcal{S})$ in which $G$ is a normal subgroup of a finite group $A$ and $\mathcal{S}$ is a finite set with an $A$-action such that the quotient $A/G$ is cyclic and $G$ acts transitively on $\mathcal{S}$. Thus we can use the following two lemmas, see \cite[Lemmas~4.2 and 4.3]{GTZ}.

\begin{lem} \label{burnside}
Let $H_1$ be a finite group acting on a finite set $\mathcal{T}$, let $H_2$ be a normal subgroup of $H_1$ with $H_1/H_2$ cyclic, and let $\sigma H_2$ be a generator of $H_1/H_2$. Then the number of $H_1$-orbits on $\mathcal{T}$ which are also $H_2$-orbits equals
\[
\frac{1}{\abs{H_2}} \sum_{h\in\sigma H_2} \abs{\mathcal{T}^h},
\]
where $\mathcal{T}^h$ denotes the set of fixed points of $h$ in $\mathcal{T}$.
\end{lem}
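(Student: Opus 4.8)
The plan is to localize the count to a single $H_1$-orbit. Since $\mathcal{T}$ is the disjoint union of its $H_1$-orbits and each fixed-point set $\mathcal{T}^h$ decomposes compatibly, it suffices to prove that for any one $H_1$-orbit $O\subseteq\mathcal{T}$ we have
\[
\frac{1}{\abs{H_2}}\sum_{h\in\sigma H_2}\abs{O^h}=
\begin{cases}
1,&\text{if }O\text{ is a single }H_2\text{-orbit},\\
0,&\text{otherwise}.
\end{cases}
\]
Summing this identity over all $H_1$-orbits of $\mathcal{T}$ then gives the lemma, because an $H_1$-orbit ``is also an $H_2$-orbit'' precisely when it is a single $H_2$-orbit.

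To prove the displayed identity I would count the set $\{(h,x)\in\sigma H_2\times O:hx=x\}$ in two ways, obtaining
\[
\sum_{h\in\sigma H_2}\abs{O^h}=\sum_{x\in O}\abs{\sigma H_2\cap\Stab_{H_1}(x)}.
\]
Fix $x\in O$ and set $S\colonequals \Stab_{H_1}(x)$. If $\sigma H_2\cap S$ is nonempty it is a coset of $S\cap H_2=\Stab_{H_2}(x)$ in $S$, so $\abs{\sigma H_2\cap S}$ equals $0$ or $\abs{S\cap H_2}$, and it is nonzero exactly when $\sigma H_2$ lies in the image $\bar S\colonequals SH_2/H_2$ of $S$ in $H_1/H_2$. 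The structural point, which uses that $H_2$ is normal in $H_1$, is that $\bar S$ is precisely the stabilizer in $H_1/H_2$ of the $H_2$-orbit $H_2x\subseteq O$; indeed $H_2S=\{g\in H_1:gx\in H_2x\}$, the preimage in $H_1$ of that stabilizer.

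The case analysis is then short. If $O$ is a single $H_2$-orbit, then $H_2x=O$ for every $x\in O$, hence $\bar S=H_1/H_2$; so $\sigma H_2\cap S\neq\emptyset$ and $\abs{\sigma H_2\cap S}=\abs{S\cap H_2}=\abs{S}\,\abs{H_2}/\abs{H_1}$, and since orbit--stabilizer gives $\abs{S}=\abs{H_1}/\abs{O}$, summing over the $\abs{O}$ points of $O$ yields $\abs{H_2}$, so the weighted sum equals $1$. If $O$ is not a single $H_2$-orbit, then $H_1/H_2$ acts transitively and nontrivially on the set of $H_2$-orbits contained in $O$, so the stabilizer $\bar S$ of $H_2x$ is a proper subgroup of $H_1/H_2$; since $\sigma H_2$ generates $H_1/H_2$ it cannot lie in $\bar S$, whence $\sigma H_2\cap S=\emptyset$ for every $x\in O$ and the sum is $0$. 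This proves the identity, and hence the lemma.

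The step I expect to be the crux is the identification of $\bar S$ with the $(H_1/H_2)$-stabilizer of $H_2x$ and the resulting dichotomy ``$\bar S=H_1/H_2$ or $\bar S$ is proper''. This is where both hypotheses are used: normality of $H_2$ lets $H_1/H_2$ act on the set of $H_2$-orbits inside $O$, and the assumption that $\sigma H_2$ is a generator of the (necessarily cyclic) quotient $H_1/H_2$ is exactly what forces $\sigma H_2$ out of every proper subgroup. Everything else is routine orbit--stabilizer bookkeeping, so I do not anticipate a further obstacle; as a sanity check, taking $H_2=H_1$, so that $\sigma H_2=H_1$ and every $H_1$-orbit is trivially an $H_2$-orbit, recovers the ordinary Cauchy--Frobenius lemma $\#(H_1\text{-orbits on }\mathcal{T})=\frac{1}{\abs{H_1}}\sum_{h\in H_1}\abs{\mathcal{T}^h}$.
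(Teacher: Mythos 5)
The paper does not actually prove Lemma~\ref{burnside}; it cites \cite[Lemma~4.2]{GTZ} and moves on, so there is no internal argument to compare against. Your proof is correct. The reduction to a single $H_1$-orbit $O$ via the double count $\sum_{h\in\sigma H_2}\abs{O^h}=\sum_{x\in O}\abs{\sigma H_2\cap\Stab_{H_1}(x)}$ is sound, the coset structure of $\sigma H_2\cap S$ over $S\cap H_2$ is right, and the pivotal identification of $SH_2/H_2$ (with $S=\Stab_{H_1}(x)$) as the $(H_1/H_2)$-stabilizer of the $H_2$-orbit $H_2x$ is exactly where normality of $H_2$ enters; together with the fact that a generator of a cyclic group lies in no proper subgroup, this correctly yields the $1/0$ dichotomy, and the orbit--stabilizer arithmetic in the first case checks out. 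One streamlining you might note: the same double count, applied to all of $\mathcal{T}$ at once, shows directly that $\frac{1}{\abs{H_2}}\sum_{h\in\sigma H_2}\abs{\mathcal{T}^h}$ equals the number of $H_2$-orbits fixed setwise by the coset $\sigma H_2$ acting on $\mathcal{T}/H_2$ (for each $x$, $\sigma H_2\cap\Stab_{H_1}(x)$ is nonempty iff $\sigma$ fixes $H_2x$, in which case it has size $\abs{H_2}/\abs{H_2x}$, and summing over a fixed orbit $P$ gives $\abs{H_2}$); since $\sigma H_2$ generates $H_1/H_2$, the orbits it fixes are precisely the $H_1$-invariant ones, i.e.\ the $H_1$-orbits that are $H_2$-orbits. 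This avoids the per-orbit case split, but your version is equally valid and makes the role of each hypothesis a bit more visible.
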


\begin{lem} \label{count}
Suppose $H_2$ is a normal subgroup of a finite group $H_1$ and $\mathcal{T}$ is a finite set with an $H_1$-action such that the quotient $H_1/H_2$ is cyclic and $H_2$ acts transitively on $\mathcal{T}$. Then the following are equivalent:
\begin{enumerate}
\item the diagonal is the unique common orbit of $H_1$ and $H_2$ in $\mathcal{T}\times \mathcal{T}$;
\item any $\sigma\in H_1$ with $H_1=\langle H_2,\sigma\rangle$ has a unique fixed point in $\mathcal{T}$;
\item any $\sigma\in H_1$ with $H_1=\langle H_2,\sigma\rangle$ has at most one fixed point in $\mathcal{T}$;
\item any $\sigma\in H_1$ with $H_1=\langle H_2,\sigma\rangle$ has at least one fixed point in $\mathcal{T}$.
\end{enumerate}
\end{lem}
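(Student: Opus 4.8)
The plan is to deduce all four conditions from a single application of the orbit‑counting identity in Lemma~\ref{burnside}, used once for the action of $H_1$ on $\mathcal{T}$ and once for the diagonal action on $\mathcal{T}\times\mathcal{T}$. Fix a $\sigma\in H_1$ with $H_1=\langle H_2,\sigma\rangle$; then $\sigma H_2$ generates the cyclic group $H_1/H_2$, and I set $a_h\colonequals\abs{\mathcal{T}^h}$ for $h$ ranging over the coset $\sigma H_2$, which has $\abs{H_2}$ elements.

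First I would apply Lemma~\ref{burnside} to $H_1$ acting on $\mathcal{T}$. Since $H_2$ already acts transitively on $\mathcal{T}$, the whole set $\mathcal{T}$ is one $H_1$-orbit that is also an $H_2$-orbit, so the lemma's count equals $1$; that is, $\sum_{h\in\sigma H_2}a_h=\abs{H_2}$, so the $a_h$ are $\abs{H_2}$ nonnegative integers summing to $\abs{H_2}$. Next I would apply Lemma~\ref{burnside} to $H_1$ acting on $\mathcal{T}\times\mathcal{T}$. Here $(\mathcal{T}\times\mathcal{T})^h=\mathcal{T}^h\times\mathcal{T}^h$, so $\abs{(\mathcal{T}\times\mathcal{T})^h}=a_h^2$ and the number of $H_1$-orbits on $\mathcal{T}\times\mathcal{T}$ that are also $H_2$-orbits equals $\frac{1}{\abs{H_2}}\sum_{h\in\sigma H_2}a_h^2$. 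The diagonal $\Delta\subseteq\mathcal{T}\times\mathcal{T}$ is $H_1$-stable and, since $H_2$ is transitive on $\mathcal{T}$, is a single $H_2$-orbit; hence it is always one such common orbit, and condition (1) says exactly that it is the only one, i.e.\ that $\sum_{h\in\sigma H_2}a_h^2=\abs{H_2}$.

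It then remains to record the elementary fact that, for nonnegative integers $a_h$ indexed by a set of size $\abs{H_2}$ with $\sum a_h=\abs{H_2}$, the four conditions ``$\sum a_h^2=\abs{H_2}$'', ``every $a_h=1$'', ``every $a_h\le 1$'', ``every $a_h\ge 1$'' are equivalent: $a_h^2\ge a_h$ with equality iff $a_h\in\{0,1\}$, so $\sum a_h^2=\sum a_h$ forces every $a_h\le 1$; and $\abs{H_2}$ nonnegative integers summing to $\abs{H_2}$ with every term $\le 1$ (or every term $\ge 1$) must all equal $1$. Combining with the previous paragraph, condition (1) is equivalent to ``every element of the coset $\sigma H_2$ has at most one fixed point on $\mathcal{T}$'', and also to the versions with ``exactly one'' and ``at least one''. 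Finally I would note that the elements $\tau\in H_1$ with $H_1=\langle H_2,\tau\rangle$ are precisely those lying in a coset of $H_2$ generating $H_1/H_2$, and that the displayed equivalences hold verbatim with $\sigma$ replaced by any such generator. Since condition (1) does not involve $\sigma$, intersecting these equivalences over all generating cosets gives (1)$\iff$(2)$\iff$(3)$\iff$(4).

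The whole proof rests on the Burnside-type identity of Lemma~\ref{burnside}, so there is no real obstacle; the closest thing to a difficulty is purely bookkeeping: verifying the hypotheses of Lemma~\ref{burnside} ($H_2$ normal in $H_1$ with cyclic quotient, and $\sigma H_2$ a generator of that quotient) for both the action on $\mathcal{T}$ and the action on $\mathcal{T}\times\mathcal{T}$, and correctly handling the quantifier ``for all $\tau$ with $H_1=\langle H_2,\tau\rangle$'' by reducing it to one generating coset at a time. This is the argument of \cite[Lemmas~4.2 and 4.3]{GTZ}.
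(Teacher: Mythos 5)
Your proof is correct and is essentially the argument of \cite[Lemmas~4.2 and 4.3]{GTZ}, which the paper cites for this lemma rather than proving it. The Burnside count on $\mathcal{T}$ giving $\sum_{h\in\sigma H_2}\abs{\mathcal{T}^h}=\abs{H_2}$ from transitivity of $H_2$, the Burnside count on $\mathcal{T}\times\mathcal{T}$ translating condition~(1) into $\sum_{h\in\sigma H_2}\abs{\mathcal{T}^h}^2=\abs{H_2}$, the pointwise bound $a^2\ge a$ for nonnegative integers, and the per-coset reduction of the quantifier in (2)--(4) are all handled correctly.
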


Let $\rho \colon \Gal(N/K) \to \Gal(\mybar N/\mybar K)$ be the canonical surjective group homomorphism obtained by passing to the residue fields. Thus we have $G=\ker(\rho)$ and $G_1=\ker(\rho) \cap A_1$. Suppose $\mybar K=\F_q$ for some power of a prime $p$, and let $\theta\in\Gal(\mybar N/\mybar K)$ be the canonical generator which acts on $\mybar N$ as the $q$-the power map.
For any intermediate field $M$ of $L/K$, write $\Frob(N/M)\colonequals  \rho^{-1}(\theta) \cap \Gal(N/M)$, which consists of all $\rho$-preimages of $\theta$ in $\Gal(N/M)$. So $\Frob(N/K)$ is a canonical generator of the quotient group $A/G$. Similarly, since $L/K$ is totally ramified, $\Frob(N/L)$ is a canonical generator of the quotient group $A_1/G_1$.

With the notations induced above, we are ready to characterize the exceptionality for local fields in various ways as follows.

\begin{thm} \label{nt-ram}
Assume $L/K$ is a finite separable extension of local fields. If $L/K$ is totally ramified then the following are equivalent:
\begin{enumerate}
\item $L/K$ is exceptional;
\item $\{A_1\}$ is the unique common orbit of $A_1$ and $G_1$ in $\mathcal{S}$;
\item the diagonal is the unique common orbit of $A$ and $G$ in $\mathcal{S}\times \mathcal{S}$;
\item $\Frob(N/L)\cap\Frob(N/\sigma(L)) = \emptyset$ for any $\sigma\in A\setminus A_1$;
\item $\Frob(N/K) = \bigcup_{\sigma\in A} \Frob(N/\sigma(L))$ holds;
\item any $\sigma\in A$ with $A=\langle G,\sigma\rangle$ has a unique fixed point in $\mathcal{S}$;
\item any $\sigma\in A$ with $A=\langle G,\sigma\rangle$ has at most one fixed point in $\mathcal{S}$;
\item any $\sigma\in A$ with $A=\langle G,\sigma\rangle$ has at least one fixed point in $\mathcal{S}$;
\item any $\sigma\in \Frob(N/K)$ has a unique fixed point in $\mathcal{S}$;
\item any $\sigma\in \Frob(N/K)$ has at most one fixed point in $\mathcal{S}$;
\item any $\sigma\in \Frob(N/K)$ has at least one fixed point in $\mathcal{S}$;
\item any $\sigma\in A_1$ with $A_1=\langle G_1,\sigma\rangle$ has a unique fixed point in $\mathcal{S}$;
\item any $\sigma\in A_1$ with $A_1=\langle G_1,\sigma\rangle$ has at most one fixed point in $\mathcal{S}$;
\item any $\sigma\in \Frob(N/L)$ has a unique fixed point in $\mathcal{S}$;
\item any $\sigma\in \Frob(N/L)$ has at most one fixed point in $\mathcal{S}$.
\end{enumerate}
\end{thm}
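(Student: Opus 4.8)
The plan is to use three of the conditions as hubs — (1) (exceptionality itself), (3) (the diagonal is the unique common $A$-$G$ orbit on $\mathcal{S}\times\mathcal{S}$), and (2) (the point $\{A_1\}$ is the unique common $A_1$-$G_1$ orbit on $\mathcal{S}$) — and to route every other condition to one of these. Two standing facts power everything: since $L/K$ is totally ramified, Lemma~\ref{t-ram} makes $G$ (and of course $A$) act transitively on $\mathcal{S}$, so that both $(A,G,\mathcal{S})$ and $(A_1,G_1,\mathcal{S})$ meet the hypotheses of Lemmas~\ref{burnside} and \ref{count}; and $\theta$ generates $A/G$, so $\Frob(N/M)=\rho^{-1}(\theta)\cap\Gal(N/M)$ is nonempty exactly when $\rho(\Gal(N/M))=A/G$, i.e. exactly when $M/K$ is totally ramified. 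Applying Lemma~\ref{count} to $(H_1,H_2,\mathcal{T})=(A,G,\mathcal{S})$ already identifies (3) with each of (6), (7), (8), so it remains to show that (1), (4), (5), (9), (10), (11) and (3) are mutually equivalent and that (2), (12), (13), (14), (15) are mutually equivalent and equivalent to (3).

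First I would connect (1) to the Frobenius sets. For any $\sigma\in A$ one has $\Frob(N/L)\cap\Frob(N/\sigma(L))=\rho^{-1}(\theta)\cap A_1\cap\sigma A_1\sigma^{-1}=\Frob(N/L.\sigma(L))$, which by the remark above is empty precisely when $L.\sigma(L)/K$ is not totally ramified; since $L/K$ is totally ramified, this is exactly the equivalence (1)$\Leftrightarrow$(4). Next, writing the Frobenius sets as fixed-point data — $x\in\Frob(N/\sigma(L))$ if and only if $x\in\Frob(N/K)$ and $\sigma A_1\in\mathcal{S}^{x}$ — one reads off directly that $\bigcup_{\sigma\in A}\Frob(N/\sigma(L))=\{x\in\Frob(N/K):\mathcal{S}^{x}\neq\emptyset\}$, so (5) says exactly that every element of $\Frob(N/K)$ has a fixed point in $\mathcal{S}$, i.e. (5)$\Leftrightarrow$(11); and $\Frob(N/L)\cap\Frob(N/\sigma(L))=\{x\in\Frob(N/K): x \text{ fixes both } A_1 \text{ and } \sigma A_1\}$, so, using that $A$ is transitive on $\mathcal{S}$ and that $\Frob(N/K)=\rho^{-1}(\theta)$ is conjugation-invariant because $A/G$ is abelian, the vanishing of all these intersections for $\sigma\notin A_1$ is equivalent to no element of $\Frob(N/K)$ having two distinct fixed points in $\mathcal{S}$, i.e. (4)$\Leftrightarrow$(10).

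To close the loop at (3) I would apply Lemma~\ref{burnside} twice with the generator coset $\Frob(N/K)$ of $A/G$. Applied to $\mathcal{T}=\mathcal{S}$, which is a single common orbit of $A$ and $G$, it gives $\sum_{h\in\Frob(N/K)}\abs{\mathcal{S}^{h}}=\abs{G}=\abs{\Frob(N/K)}$; since the summands are nonnegative integers whose number equals their sum, the three assertions ``every summand $\le 1$'', ``every summand $=1$'' and ``every summand $\ge 1$'' coincide, which is (10)$\Leftrightarrow$(9)$\Leftrightarrow$(11). Applied to $\mathcal{T}=\mathcal{S}\times\mathcal{S}$, it gives that the number of common $A$-$G$ orbits there equals $\abs{G}^{-1}\sum_{h\in\Frob(N/K)}\abs{\mathcal{S}^{h}}^{2}$, which by the Cauchy--Schwarz inequality is at least $1$ and equals $1$ exactly when all the $\abs{\mathcal{S}^{h}}$ with $h\in\Frob(N/K)$ are equal to $1$; hence (3)$\Leftrightarrow$(9). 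For (3)$\Leftrightarrow$(2) I would invoke the standard bijection for the transitive $A$-set $\mathcal{S}$ with base point $A_1$ between $A$-orbits on $\mathcal{S}\times\mathcal{S}$ and $A_1$-orbits on $\mathcal{S}$, sending the orbit of $(A_1,\sigma A_1)$ to the $A_1$-orbit of $\sigma A_1$; since $G$ is also transitive on $\mathcal{S}$ with base-point stabilizer $G\cap A_1=G_1$, this bijection restricts to one between $G$-orbits and $G_1$-orbits, hence carries common orbits to common orbits and the diagonal to the point $\{A_1\}$. Finally, for (2)$\Leftrightarrow$(12)$\Leftrightarrow$(13)$\Leftrightarrow$(14)$\Leftrightarrow$(15) I would run Lemma~\ref{burnside} on $(A_1,G_1,\mathcal{S})$: the number of common $A_1$-$G_1$ orbits on $\mathcal{S}$ is $1$ exactly when the (always present) orbit $\{A_1\}$ is the only one, i.e. exactly when (2) holds, and evaluating this count as $\abs{G_1}^{-1}\sum_{h\in\Frob(N/L)}\abs{\mathcal{S}^{h}}$ — or over any other generator coset of $A_1/G_1$ — while noting that every $h\in A_1$ fixes $A_1\in\mathcal{S}$ so that $\abs{\mathcal{S}^{h}}\ge 1$ there, the same ``sum equals number of terms'' argument forces all those fixed-point counts to be $1$ and reconciles the ``unique / at most one / at least one'' variants.

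I expect the genuine work to lie not in a single hard step but in organizing the argument so that the totally-ramified hypothesis is used exactly where it is needed: it enters through Lemma~\ref{t-ram}, i.e. through the transitivity of $G$ (not merely $A$) on $\mathcal{S}$, and that transitivity is what underlies the applicability of Lemma~\ref{count} to $(A,G,\mathcal{S})$, the fact that $\mathcal{S}$ is a single common $A$-$G$ orbit (hence the identity $\sum_{h\in\Frob(N/K)}\abs{\mathcal{S}^{h}}=\abs{G}$), and the passage between orbits on $\mathcal{S}\times\mathcal{S}$ and orbits on $\mathcal{S}$. Everything else is Burnside-style counting and elementary inequalities; the one recurring subtlety is the ``at most one'' versus ``at least one'' bookkeeping, which collapses either because the relevant group elements automatically fix a point of $\mathcal{S}$ or because a sum of $\abs{G}$ nonnegative integers that equals $\abs{G}$ must consist of ones.
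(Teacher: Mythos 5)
Your proposal is correct, and it runs on the same engine as the paper's proof — Lemmas~\ref{t-ram}, \ref{burnside}, \ref{count}, the observation that $\Frob(N/M)\ne\emptyset$ exactly when $M/K$ is totally ramified, and the standard orbit bijection between $A$-orbits on $\mathcal{S}\times\mathcal{S}$ and $A_1$-orbits on $\mathcal{S}$ — but the routing among the fifteen conditions is organized differently. The paper establishes (1)$\Leftrightarrow$(2) and (1)$\Leftrightarrow$(4) directly by comparing $[M:K]$ with $[M':K']$ for $M=L.\sigma(L)$, and then proves (4)$\Leftrightarrow$(5) by an orbit-size count, while treating (3)$\Leftrightarrow$(9)--(11) by citing Lemma~\ref{burnside} on $\mathcal{T}=\mathcal{S}\times\mathcal{S}$. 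You instead prove (1)$\Leftrightarrow$(4) the same way, but then go (4)$\Leftrightarrow$(10) via conjugation-invariance of $\Frob(N/K)=\rho^{-1}(\theta)$ and transitivity of $A$, and (5)$\Leftrightarrow$(11) by reading the union directly, closing the loop through the Burnside machinery rather than through (1)$\Leftrightarrow$(2). Your extra application of Lemma~\ref{burnside} to $\mathcal{T}=\mathcal{S}$ is a nice touch: it produces the identity $\sum_{h\in\Frob(N/K)}\abs{\mathcal{S}^h}=\abs{G}$ outright, which immediately collapses (9)$\Leftrightarrow$(10)$\Leftrightarrow$(11), and then Cauchy--Schwarz on $\mathcal{T}=\mathcal{S}\times\mathcal{S}$ gives (3)$\Leftrightarrow$(9). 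The paper's single citation of Burnside on $\mathcal{S}\times\mathcal{S}$ leaves this small amount of arithmetic implicit, so in that respect your writeup supplies more detail than the original; the paper in exchange has a cleaner purely field-theoretic proof of (1)$\Leftrightarrow$(2) in terms of ramification indices, which your route obtains only indirectly through the orbit bijection. Both approaches are valid and of comparable length.
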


\begin{proof}
Since $L/K$ is totally ramified, it follows from Lemma~\ref{t-ram} that $G$ acts transitively on $\mathcal{S}$, so that by Lemma~\ref{count} items (3) and (6)--(8) are equivalent. Since $\Frob(N/K)$ is a generator of $A/G$, by applying Lemma~\ref{burnside} to $(A,G,\mathcal{S}\times\mathcal{S})$ it follows that items (3) and (9)--(11) are equivalent.
Similarly, by applying Lemma~\ref{burnside} to $(A_1,G_1,\mathcal{S})$ it follows that items (2) and (12)--(15) are equivalent. It is standard to verify the equivalence between items (2) and (3). Hence items (2)--(3) and (6)--(12) are all equivalent to one another.

Next, we show the equivalence between items (1) and (2). Henceforth we suppose $\sigma\in A\setminus A_1$. Let $M\colonequals L.\sigma(L)$ be the compositume of $L$ and $\sigma(L)$. Since $e(M'/M)=1=e(K'/K)$, we have $e(M/K)=e(M'/K')$. Since $f(M'/K')=1$, it follows that $M/K$ is totally ramified if and only if $[M:K] = [M':K']$.
Note $[M:K]$ and $[M':K']$ are the lengths of the orbits containing $\sigma A_1$ under the actions $A_1$ and $G_1$, respectively. Hence $[M:K] = [M':K']$ holds if and only if the $A_1$-orbit of $\sigma A_1$ is the same as the $G_1$-orbit of $\sigma A_1$. Thus items (1) and (2) are equivalent.

Next, let us show the equivalence between items (1) and (4). Pick $\sigma\in A\setminus A_1$ and let $M\colonequals  L.\sigma(L)$ be the compositum of $L$ and $\sigma(L)$. So $M$ is not totally ramified over $K$ if and only if $\mybar M$ is larger than $\F_q\colonequals \mybar K$.
Since $\rho \colon \Gal(N/M) \to \Gal(\mybar N/\mybar M)$ is surjective, it follows that $\F_q\subsetneq \mybar M$ if and only if $\Frob(N/M) \colonequals \rho^{-1}(\theta) \cap \Gal(N/M)$ is empty, or equivalently $\Frob(N/L)\cap\Frob(N/\sigma(L)) = \emptyset$.
Hence $L.\sigma(L)$ is not totally ramified over $K$ if and only if $\Frob(N/L)\cap\Frob(N/\sigma(L)) = \emptyset$ for any $\sigma\in A\setminus A_1$, which shows the equivalence of (1) and (4).

It remains to show only that (4) and (5) are equivalent. First, we claim item (4) holds if and only if $\Frob(N/\tau(L))\cap\Frob(N/\sigma(L))=\emptyset$ for any $\tau,\sigma\in A$ with $\tau A_1 \ne \sigma A_1$.
It is enough to show the ``only if'' part of this claim, since the ``if'' part follows easily by taking $\tau$ as the identity map on $N$. Henceforth we suppose item (4) holds. Pick any $\tau,\sigma\in A$ with $\tau A_1 \ne \sigma A_1$, so that $\tau^{-1}\sigma\in A\setminus A_1$.
It follows from item (4) that $\Frob(N/L)\cap\Frob(N/(\tau^{-1}\sigma)(L)) = \emptyset$, or equivalently $\rho^{-1}(\theta)\cap\Gal(N/L.(\tau^{-1}\sigma)(L)) = \emptyset$, which by conjugation with $\tau$ says $\rho^{-1}(\theta)\cap\Gal(N/\tau(L).\sigma(L)) = \emptyset$, which is equivalent to the desired $\Frob(N/\tau(L))\cap\Frob(N/\sigma(L)) = \emptyset$.
Next, we claim item (5) holds if and only if the above same property holds, which says $\Frob(N/\tau(L))\cap\Frob(N/\sigma(L))=\emptyset$ for any $\tau,\sigma\in A$ with $\tau A_1 \ne \sigma A_1$. Since the homomorphism $\rho \colon \Gal(N/K) \to \Gal(\mybar N/\mybar K)$ is surjective we know $\abs{\Gal(N/K)} = \abs{\Frob(N/K)} \cdot \abs{\Gal(\mybar N/\mybar K)}$.
Similarly, since $L/K$ is totally ramified $\abs{\Gal(N/\sigma(L))} = \abs{\Frob(N/\sigma(L))} \cdot \abs{\Gal(\mybar N/\mybar K)}$ for any $\sigma \in A$. Hence for any $\sigma \in A$ we have
\[
\frac{\abs{\Frob(N/K)}}{\abs{\Frob(N/\sigma(L))}} = \frac{\abs{\Gal(N/K)}}{\abs{\Gal(N/\sigma(L))}} = [\sigma(L):K] = [L:K].
\]
Hence our second claim follows by counting the sizes. Thus items (4) and (5) are equivalent, which concludes the proof.
\end{proof}

Roughly speaking, our next result says that the exceptionality for local fields is preserved well under the composition in a perfect way.

\begin{thm} \label{subext}
Suppose $L/K$ is a finite separable extension of local fields. Then for any intermediate field $M$ of $L/K$ we have that $L/K$ is exceptional if and only if both $L/M$ and $M/K$ are exceptional.
\end{thm}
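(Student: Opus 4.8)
\emph{Plan.} I would deduce the theorem from the fixed-point characterizations of exceptionality collected in Theorem~\ref{nt-ram}, together with a short orbit-counting argument over the tower $K\subseteq M\subseteq L$. First I would dispose of the degenerate case: since ramification indices multiply, $e(L/K)=e(L/M)\,e(M/K)$, so $L/K$ is totally ramified iff both $L/M$ and $M/K$ are, and an exceptional extension is totally ramified by definition; hence if $L/K$ is not totally ramified both sides of the asserted equivalence fail, and we may assume $L/K$, $L/M$ and $M/K$ all totally ramified. I would also record the routine fact that exceptionality of a totally ramified finite separable extension can be tested inside \emph{any} Galois extension of the base containing the top field, not just inside the Galois closure, because condition~(2) of the definition involves the $K$-embeddings of $L$ and the fields $L.\sigma(L)$ only through $\sigma|_L$. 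So, writing $N$ for the Galois closure of $L/K$ (which is Galois over $M$ and contains the Galois closures of $M/K$ and of $L/M$), all three exceptionalities can be phrased inside $N$. Put $A:=\Gal(N/K)$, $B:=\Gal(N/M)$, $A_1:=\Gal(N/L)$ with $A_1\le B\le A$; let $G$ be the geometric monodromy group of $L/K$; and set $\mathcal S:=A/A_1$, $\mathcal T:=A/B$, with the $A$-equivariant surjection $\mathcal S\to\mathcal T$. Lemma~\ref{t-ram}, applied to $L/K$, to $M/K$ and to $L/M$, shows that total ramification makes $G$ act transitively on $\mathcal S$ and on $\mathcal T$ and makes $G\cap B$ act transitively on $B/A_1$, while $\Frob(N/K)$ and $\Frob(N/M)$ are cosets of $G$ and of $G\cap B$ generating the cyclic groups $A/G$ and $B/(G\cap B)$. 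By Theorem~\ref{nt-ram}: $L/K$ is exceptional iff $\abs{\mathcal S^{\,h}}=1$ for every $h\in\Frob(N/K)$; $M/K$ is exceptional iff $\abs{\mathcal T^{\,h}}=1$ for every $h\in\Frob(N/K)$; and $L/M$ is exceptional iff $\abs{(B/A_1)^{\,k}}=1$ for every $k\in\Frob(N/M)$.

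\emph{The two inputs.} The first is the elementary fibering identity: for every $h\in A$,
\[
\abs{\mathcal S^{\,h}}=\sum_{yB\in\mathcal T^{\,h}}\abs{(B/A_1)^{\,y^{-1}hy}},
\]
together with the observation that if $h\in\Frob(N/K)$ then each conjugate $y^{-1}hy$ occurring on the right lies in $\Frob(N/M)$, because $A/G$ is abelian. The second is Lemma~\ref{burnside} applied to $(A,G,\mathcal T)$ with generating coset $\Frob(N/K)$, which, using that $G$ is transitive on $\mathcal T$, gives $\sum_{h\in\Frob(N/K)}\abs{\mathcal T^{\,h}}=\abs{G}=\abs{\Frob(N/K)}$.

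\emph{The argument.} For ``$\Leftarrow$'': if $M/K$ and $L/M$ are exceptional, then for each $h\in\Frob(N/K)$ the set $\mathcal T^{\,h}$ is a single coset $\{y_0B\}$, and $y_0^{-1}hy_0\in\Frob(N/M)$ has exactly one fixed point on $B/A_1$; so the fibering identity gives $\abs{\mathcal S^{\,h}}=1$, whence $L/K$ is exceptional. For ``$\Rightarrow$'': assume $L/K$ exceptional, so $\abs{\mathcal S^{\,h}}=1$ for all $h\in\Frob(N/K)$. No such $h$ can have $\mathcal T^{\,h}=\emptyset$, since then the fibering identity would force $\abs{\mathcal S^{\,h}}=0$; hence $\abs{\mathcal T^{\,h}}\ge 1$ for all $h\in\Frob(N/K)$, and as these integers sum to $\abs{\Frob(N/K)}$ they are all equal to $1$, i.e.\ $M/K$ is exceptional. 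Then for each $h\in\Frob(N/K)$ the fibering identity reads $1=\abs{(B/A_1)^{\,y_0^{-1}hy_0}}$ for the unique $y_0B\in\mathcal T^{\,h}$; and every $k\in\Frob(N/M)$ arises as such a conjugate (take $h=k$, for which $y_0B=eB$), so $L/M$ is exceptional.

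\emph{Main obstacle.} I expect the difficulty to lie not in any single step but in the bookkeeping behind the preliminary reduction — carefully verifying that the exceptionalities of $M/K$ and of $L/M$ are genuinely visible inside the one field $N$, with the correct identification of $\Frob(N/K)$, $\Frob(N/M)$ and of the three coset actions — together with the one slightly non-obvious point in the ``$\Rightarrow$'' direction: that the single Burnside identity $\sum_{h\in\Frob(N/K)}\abs{\mathcal T^{\,h}}=\abs{\Frob(N/K)}$, combined with nonnegativity, is exactly what forces all $\abs{\mathcal T^{\,h}}=1$ and thereby decouples the two sub-exceptionalities, so that $M/K$-exceptionality and $L/M$-exceptionality fall out of essentially the same computation.
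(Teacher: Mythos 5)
Your proof is correct, and it takes a genuinely different route from the paper's. The paper proves the ``if'' direction by a direct contrapositive with explicit elements: a witness $\sigma\in\Hom_K(L,N)\setminus\{\mathrm{id}\}$ with $L.\sigma(L)/K$ totally ramified yields a witness against the exceptionality of $L/M$ or of $M/K$, according to whether $\sigma|_M$ is trivial. For the ``only if'' direction the paper lifts Frobenius elements from $\Gal(N_1/M)$ and $\Gal(N_2/K)$ into $\Gal(N/M)$ and $\Gal(N/K)$, then invokes items (10) (``at most one fixed point'') and (11) (``at least one fixed point'') of Theorem~\ref{nt-ram} separately, one for each subextension. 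Your argument instead threads both directions through a single device: the equivariant fibering $\mathcal S\to\mathcal T$ with the identity
\[
\abs{\mathcal S^{\,h}}=\sum_{yB\in\mathcal T^{\,h}}\abs{(B/A_1)^{\,y^{-1}hy}},
\]
plus the single Burnside count $\sum_{h\in\Frob(N/K)}\abs{\mathcal T^{\,h}}=\abs{\Frob(N/K)}$. This makes the ``$\Rightarrow$'' direction slicker: you obtain both $\abs{\mathcal T^{\,h}}=1$ (exceptionality of $M/K$) and $\abs{(B/A_1)^{\,k}}=1$ for all $k\in\Frob(N/M)$ (exceptionality of $L/M$) from one counting argument, whereas the paper treats the two subextensions asymmetrically, via items (10) and (11). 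Your preliminary ``routine fact'' --- that exceptionality can be tested in any Galois overfield, not only the Galois closure --- is correct and is implicitly the same lifting step the paper carries out by hand. Both proofs ultimately rest on Lemma~\ref{burnside}; the paper leans on it indirectly through the equivalence of items (9)--(11) of Theorem~\ref{nt-ram}, while you invoke it directly on $(A,G,\mathcal T)$. One tiny polish point: in the final step of ``$\Rightarrow$'' you should either fix $y_0=e$ as the representative (which you may, since $k\in B$ fixes $eB$), or note that different representatives of $y_0B$ conjugate $h$ by an element of $B$ and hence leave $\abs{(B/A_1)^{\,y_0^{-1}hy_0}}$ unchanged; as written the jump from $y_0^{-1}ky_0$ to $k$ is left implicit.
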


\begin{proof}
Suppose $L/K$ is a finite separable extension of local fields and $M$ is an intermediate field of $L/K$. It is obvious that $L/K$ is totally ramified if and only if both $L/M$ and $M/K$ are totally ramified. Since the exceptionality implies the total ramification, we may assume in addition that $L/K$ is totally ramified.

First, we show the ``if'' part. Suppose $L/K$ is not exceptional, so that $L.\sigma(L)$ is totally ramified over $K$ for some $\sigma\in A\setminus A_1$, or equivalently $L.\sigma(L)$ is totally ramified over $K$ for some $\sigma\in \Hom_K(L,N)$ which is not the identity map on $L$.
If the restriction $\sigma|_M$ of $\sigma$ on $M$ is the identity map, then $\sigma$ is a non-identity map in $\Hom_M(L,N)$, so that $L/M$ is not exceptional since $L.\sigma(L)$ is totally ramified over $M$.
If $\sigma|_M$ is not the identity map on $M$, then $\sigma|_M$ is a non-identity map in $\Hom_K(M,N)$ such that $M.\sigma|_M(M)$ is totally ramified over $K$, which implies that $M/K$ is not exceptional.

Next, we show the ``only if'' part. Suppose $L/K$ is exceptional, which by item (9) of Theorem~\ref{nt-ram} says that any $\sigma\in \Frob(N/K)$ has a unique fixed point in $\mathcal{S}$, or equivalently any $\sigma\in \Gal(N/K)$ which induces the $q$-th power map on $\mybar N$ has a unique fixed point in $\Hom_K(L,N)$, where $\F_q\colonequals \mybar K$ is the residue field of $K$.
Let $N_1$ and $N_2$ be the Galois closures in $N$ of $L/M$ and $M/K$, respectively. In order to show $L/M$ is exceptional, by item (10) of Theorem~\ref{nt-ram} it is enough to show that any element in $\Gal(N_1/M)$ which induces the $q$-th power map on $\mybar N_1$ has at most a fixed point in $\Hom_M(L,N_1)$.
Similarly, in order to show $M/K$ is exceptional, by item (11) of Theorem~\ref{nt-ram} it is enough to show that any element in $\Gal(N_2/K)$ which induces the $q$-th power map on $\mybar N_2$ has at least a fixed point in $\Hom_K(M,N_2)$.

We claim that any element in $\Gal(N_1/M)$ which induces the $q$-th power map on $\mybar N_1$ can be lifted to some element in $\Gal(N/M)$ which induces the $q$-th power map on $\mybar N$. Suppose $\sigma\in \Gal(N_1/M)$ which induces the $q$-th power map on $\mybar N_1$.
We can lift $\sigma$ to some element in $\Gal(N/M)$, which is also denoted by $\sigma$ by abuse of language. Pick $\tau\in \Gal(N/K)$ which induces the $q$-th power map on $\mybar N$. Thus both $\sigma$ and $\tau$ induce the $q$-th power map on $\mybar N_1$.
Since the homomorphism $\Gal(N/N_1) \to \Gal(\mybar N/\mybar N_1)$ is surjective, there exists $\widetilde\sigma\in \Gal(N/M)$ which extends $\sigma$ and induces the $q$-th power map on $\mybar N$. By the same argument as above, we can show that any element in $\Gal(N_2/K)$ which induces the $q$-th power map on $\mybar N_2$ can be lifted to some element in $\Gal(N/K)$ which induces the $q$-th power map on $\mybar N$.

Now we are ready to show both $L/M$ and $M/K$ are exceptional. First we show $L/M$ is exceptional. Suppose $\sigma\in\Gal(N_1/M)$ which induces the $q$-th power map on $\mybar N_1$. By the above claim $\sigma$ can be lifted to some $\widetilde\sigma\in\Gal(N/M)$ which induces the $q$-th power map on $\mybar N$.
Since $L/K$ is exceptional it follows that $\widetilde\sigma$ has a unique fixed point in $\Hom_K(L,N)$, which implies that $\sigma$ has at most a fixed point in $\Hom_M(L,N_1)$ since $\Hom_M(L,N_1)$ is a subset of $\Hom_K(L,N)$. Thus $L/M$ is exceptional.
Next let us show $M/K$ is exceptional. Suppose $\sigma\in\Gal(N_2/K)$ which induces the $q$-th power map on $\mybar N_2$, so that $\sigma$ can be lifted to some $\widetilde\sigma\in\Gal(N/K)$ which induces the $q$-th power map on $\mybar N$.
Since $L/K$ is exceptional it follows that $\widetilde\sigma$ has a unique fixed point in $\Hom_K(L,N)$, which implies that $\sigma$ has at least a fixed point in $\Hom_K(M,N_2)$ since there is a map $\Hom_K(L,N) \to \Hom_K(M,N_2)$ obtained by the restriction. Thus $M/K$ is exceptional.
\end{proof}

Now we are ready to show Theorem~\ref{coprime}, which characterizes exceptional local field extensions of degree coprime to the characteristic of the residue fields.

\begin{proof}[Proof of Theorem~\ref{coprime}]
Note that roots of unity in a local field of order coprime to the characteristic of the residue field are the image of the Teichm\"uller character.
Since $\gcd(n,q)=1$ it follows that $K$ has some nontrivial $n$-th root of unity if and only if $\F_q$ has some nontrivial $n$-th root of unity, or equivalently $\gcd(n,q-1)\ne 1$. Thus item (2) and (3) are equivalent. It remains to show items (1), (3), (4) are equivalent.

Let $\mathcal{O}_K$ be the ring of integers in $K$ and $\mathfrak{m}_K$ be its maximal ideal, so that $\F_q$ is the residue field $\mybar K\colonequals  \mathcal{O}_K/\mathfrak{m}_K$ of $K$. Let $x$ and $z$ be uniformizers of the local fields $L$ and $K$, respectively.
Since $L/K$ is totally ramified we have $z = x^n \cdot \sum_{i=0}^{\infty} a_i x^i$ for some $a_i\in \mathcal{O}_K$ with $a_0\notin \mathfrak{m}_K$. We may assume in addition that $a_0=1$ by replacing $z$ with $a_0z$ if necessary. Since $\gcd(n,q)=1$ we have $\sum_{i=0}^{\infty} a_i x^i = \Bigl( \sum_{i=0}^{\infty} b_i x^i \Bigr)^n$ for some $b_i\in \mathcal{O}_K$ with $b_0=1$.
Write $y\colonequals  x \cdot \sum_{i=0}^{\infty} b_i x^i$, so that $z = y^n$ and $y$ is an uniformizer of $M$ with $L\colonequals  K(y)$. Since $y$ is a generator of $L$ over $K$, it follows that $X^n-z\in K[X]$ the minimal polynomial of $y$ over $K$. Let $N$ be the Galois closure of $L/K$, so that
\[
X^n-z = X^n-y^n = \prod_{i=0}^{n-1} (X-\zeta^iy)
\]
where $\zeta$ is a primitive $n$-th root of unity in $N$.

Now we are ready to show that items (1) and (3) are equivalent. For any $\sigma\in\Gal(N/K)$ it is easy to check that $\sigma\notin\Gal(N/L)$ if and only if $\sigma(L) = K(\zeta^i y)$ for some integer $i$ with $1\le i\le n-1$. Suppose $\sigma\in\Gal(N/K)\setminus\Gal(N/L)$, so that $\sigma(L) = K(\zeta^i y)$ for some $1\le i\le n-1$.
Since $L.\sigma(L) = K(\zeta^i,y)$, it follows that $L.\sigma(L)$ is totally ramified over $K$ if and only if $K(\zeta^i,y)$ and $K$ contain the same roots of unity of order coprime to $q$, or equivalently $\zeta^i$ lies in $K$.

It remains only to show the equivalence between items (1) and (4). First we suppose $L/K$ is not exceptional, so that $K$ contains some nontrivial $n$-th root of unity. So there exists some divisor $m>1$ of $n$ such that $\zeta^{n/m}\in K$.
It follows that $M\colonequals K(y^{n/m})$ is an intermediate field of $L/K$ such that $M/K$ is Galois of degree $m>1$. Next we suppose there exists some intermediate field $M$ of $L/K$ such that $M$ is a proper Galois extension over $K$.
Since $[M:K]>1$ we can pick some non-identity $\sigma\in \Gal(M/K)$, so that $M.\sigma(M)=M$ which is totally ramified over $K$. Hence $M/K$ is not exceptional, which by Theorem~\ref{subext} implies $L/K$ is not exceptional. This concludes the proof.
\end{proof}


\section{Third proof of the Carlitz--Wan conjecture}

In this section we give the third proof of Theorem~\ref{gcw}, which is the generalization of the Carlitz--Wan conjecture to curves. This proof uses more group theory in terms of monodromy groups, decomposition groups, and inertia groups.
It requires more knowledge of exceptional local field extensions, which has been established in Section 4. The advantage of this proof is that it is more insightful in the sense that it sheds more lights on the truth of the Carlitz--Wan conjecture.

Let us restate as follows the notion of exceptional covers introduced in Definition~\ref{c-exc} in the language of function fields. Since the part of pure inseparability will not make any essential difference for our purposes, we may assume the finite morphism $f\colon X \to Y$ of curves over $\F_q$ is separable, so that the extension $\F_q(X)/\F_q(Y)$ of function fields over $\F_q$ is finite separable. Let $\Omega$ be the Galois closure of $\F_q(X)/\F_q(Y)$, and let $\F_{q^{\ell}}$ be the algebraically closure of $\F_q$ in $\Omega$.

\begin{defn}
The Galois groups $A\colonequals \Gal(\Omega/\F_q(Y))$ and $G\colonequals \Gal(\Omega/\F_{q^{\ell}}(Y))$ are called the \emph{arithmetic monodromy group} and the \emph{geometric monodromy group} of $\F_q(X)/\F_q(Y)$, respectively.
\end{defn}

Thus $G$ is a normal subgroup of $A$ with quotient $A/G$ canonically isomorphic to $\Gal(\F_{q^{\ell}}/\F_q)$, so that $A/G$ is finite cyclic of order $\ell$. Write $A_1\colonequals \Gal(\Omega/\F_q(X))$ and $G_1\colonequals \Gal(\Omega/\F_{q^{\ell}}(X))$.
Let $\mathcal{S}$ be the set of all left cosets of $A_1$ in $A$, which may be viewed canonically as the set of all $\F_q(Y)$-algebra homomorphisms of $\F_q(X)$ into $\Omega$. Thus $A$ acts naturally on $\mathcal{S}$ and $G$ acts transitively on $\mathcal{S}$.

The following result is a standard fact on exceptional covers, which follows from the combination of \cite[Lemmas~4.1 and 4.3]{GTZ}.

\begin{lem} \label{standard}
Suppose $f\colon X \to Y$ is a finite separable morphism of curves over\/ $\F_q$. Then the following properties are equivalent:
\begin{enumerate}
\item $f\colon X \to Y$ is an exceptional cover;
\item $\{A_1\}$ is the unique common orbit of $A_1$ and $G_1$ in $\mathcal{S}$;
\item the diagonal is the unique common orbit of $A$ and $G$ in $\mathcal{S}\times \mathcal{S}$;
\item any $\sigma\in A$ with $A=\langle G,\sigma\rangle$ has a unique fixed point in $\mathcal{S}$;
\item any $\sigma\in A$ with $A=\langle G,\sigma\rangle$ has at most a fixed point in $\mathcal{S}$;
\item any $\sigma\in A$ with $A=\langle G,\sigma\rangle$ has at least a fixed point in $\mathcal{S}$.
\end{enumerate}
\end{lem}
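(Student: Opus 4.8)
The plan is to translate the entire statement into the group-theoretic triple $(A,G,\mathcal{S})$ and then invoke the counting lemmas already recorded. First note that $G$ is a normal subgroup of $A$ with $A/G\cong\Gal(\F_{q^{\ell}}/\F_q)$ cyclic, and $G$ acts transitively on $\mathcal{S}$; thus $(A,G,\mathcal{S})$ satisfies the hypotheses of Lemma~\ref{count}. Applying Lemma~\ref{count} with $H_1=A$, $H_2=G$, $\mathcal{T}=\mathcal{S}$ — and noting that the $\sigma\in A$ with $A=\langle G,\sigma\rangle$ are exactly those for which $\sigma G$ generates the cyclic group $A/G$ — we obtain at once the equivalence of items (3), (4), (5) and (6), item (3) being condition (1) of Lemma~\ref{count}.

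It then remains to link item (1) to item (3) and to record the elementary equivalence (2)$\Leftrightarrow$(3). For the former I would use the standard dictionary between components of a fiber product and orbits: the irreducible components of $X\times_Y X$ correspond to the $A$-orbits on $\mathcal{S}\times\mathcal{S}$; such a component is geometrically irreducible (that is, stays a single component after base change to $\mybar\F_q$) precisely when the corresponding $A$-orbit is already a single $G$-orbit; and the diagonal corresponds to the diagonal orbit $\{(s,s):s\in\mathcal{S}\}$, which is a single $G$-orbit because $G$ is transitive on $\mathcal{S}$. Hence Definition~\ref{c-exc} says exactly that the diagonal orbit is the only $A$-orbit on $\mathcal{S}\times\mathcal{S}$ that is also a single $G$-orbit, which is item (3); this is \cite[Lemma~4.1]{GTZ}, and can alternatively be extracted from the Galois correspondence applied to $\F_q(X)\otimes_{\F_q(Y)}\F_q(X)$ together with its scalar extension to $\mybar\F_q$. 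For (2)$\Leftrightarrow$(3), fix the base point $s_0\colonequals A_1\in\mathcal{S}$; then $\mathcal{O}\mapsto\{s:(s_0,s)\in\mathcal{O}\}$ is a bijection from $A$-orbits on $\mathcal{S}\times\mathcal{S}$ to $A_1$-orbits on $\mathcal{S}$ sending the diagonal orbit to the fixed point $\{s_0\}$, and likewise $G$-orbits on $\mathcal{S}\times\mathcal{S}$ correspond to $G_1$-orbits on $\mathcal{S}$ since $\Stab_G(s_0)=G_1$; a one-line index count, using $[A:G]=[A_1:G_1]$ (which follows from $A=GA_1$, a restatement of the transitivity of $G$ on $\mathcal{S}$, whence $A/G=GA_1/G\cong A_1/G_1$), shows that an $A$-orbit is a single $G$-orbit exactly when the matching $A_1$-orbit is a single $G_1$-orbit. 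This gives (2)$\Leftrightarrow$(3) and completes the chain.

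The only genuinely non-routine step is the geometric translation in the second paragraph: one has to be careful about the meaning of ``geometrically irreducible component defined over $\F_q$'' and about the compatibility of the component/orbit dictionary with base change to $\mybar\F_q$. Everything else is formal — the equivalences among items (3)--(6) are a direct application of Lemma~\ref{count}, and (2)$\Leftrightarrow$(3) is elementary orbit bookkeeping using only the transitivity of $G$ on $\mathcal{S}$.
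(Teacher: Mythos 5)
Your proposal is correct and is entirely consistent with the paper, which itself supplies no proof of Lemma~\ref{standard} but simply remarks that it ``follows from the combination of \cite[Lemmas~4.1 and~4.3]{GTZ}.'' You have in effect unpacked that citation: the equivalence of items (3)--(6) is indeed a verbatim application of Lemma~\ref{count} (which \emph{is} \cite[Lemma~4.3]{GTZ}) to the triple $(A,G,\mathcal{S})$, and the equivalence (1)$\Leftrightarrow$(3) is \cite[Lemma~4.1]{GTZ}, which you re-derive via the standard dictionary between irreducible components of $X\times_Y X$ (resp.\ of its base change to $\mybar\F_q$) and $A$-orbits (resp.\ $G$-orbits) on $\mathcal{S}\times\mathcal{S}$. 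Your treatment of (2)$\Leftrightarrow$(3) via the base-point correspondence $\mathcal{O}\mapsto\{s:(s_0,s)\in\mathcal{O}\}$ is the same ``standard'' bookkeeping the paper alludes to in the proof of Theorem~\ref{nt-ram}; one small remark is that the index count $[A:G]=[A_1:G_1]$ is not actually needed there --- since $G_1=G\cap A_1=\Stab_G(s_0)$, the bijection between $G$-orbits on $\mathcal{S}\times\mathcal{S}$ and $G_1$-orbits on $\mathcal{S}$ refines the bijection between $A$-orbits and $A_1$-orbits, so an $A$-orbit is a single $G$-orbit iff the matching $A_1$-orbit is a single $G_1$-orbit, with no counting required. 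Apart from that cosmetic simplification, the argument is complete and takes exactly the route the paper intends.
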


Next, we give our third proof of Theorem~\ref{gtol} with a group-theoretic argument, which passes some property about the pair $(A,G)$ of the arithmetic monodromy group and the geometric monodromy group to the same property about the pair $(D,I)$ of the decomposition group and the inertia group over some prescribe rational place.

\begin{proof}[The third proof of Theorem~\ref{gtol}]
Let $L\colonequals \F_q(X)$ and $K\colonequals \F_q(Y)$ be the function fields of $X$ and $Y$, respectively. Thus $L/K$ is a finite separable extension of function fields over $\F_q$. Let $N$ be the Galois closure of $L/K$, and let $A$ and $G$ be the arithmetic and geometric monodromy groups of $L/K$, respectively.
By item (4) of Lemma~\ref{standard} the exceptionality of $f\colon X\to Y$ says that any $\sigma\in A$ with $A=\langle G,\sigma\rangle$ has a unique fixed point in $\mathcal{S}$, where $\mathcal{S}\colonequals \Hom_K(L,N)$ is the set of all $K$-algebra homomorphisms of $L$ into $N$.

Pick a place $w$ of $N$ which lies over the prescribed $\F_q$-rational place $u$ of $L$. Let $D$ and $I$ be the decomposition group and the inertia group of $N/K$ at $w$, respectively.
Since $DG=A$ the canonical homomorphism $D/I \to A/G$ is surjective, so that any $\sigma\in D$ with $D=\langle I,\sigma\rangle$ satisfies $A=\langle G,\sigma\rangle$.
By the above characterization of the exceptionality of $f$ in terms of $(A,G)$, we get the same property about the pair $(D,I)$, which says that any $\sigma\in D$ with $D=\langle I,\sigma\rangle$ has a unique fixed point in $\mathcal{S}$.

Let $N_w$, $L_u$, and $K_v$ be the completions of the function fields $N$, $L$, and $K$ with respect to the places $w$, $u$, and $v$, respectively. Thus $L_u$ is an intermediate field of the finite Galois extension $N_w/K_v$. We need to show the finite separable extension $L_u/K_v$ of local fields is exceptional. Let $\widehat N$ be the Galois closure in $N_w$ of $L_u/K_v$.
By item (10) of Theorem~\ref{nt-ram} the extension $L_u/K_v$ is exceptional if and only if any $\sigma\in \Gal(\widehat N/K_v)$ which induces the $q$-th power map on the residue field of $\widehat N$ has at most a fixed point in the set $\Hom_{K_v}(L_u,\widehat N)$ of all $K_v$-algebra homomorphisms of $L_u$ into $\widehat N$.
It is routine to check that any such $\sigma\in \Gal(\widehat N/K_v)$ can be lifted to some element $\widetilde\sigma\in D$ with $D=\langle I,\widetilde\sigma\rangle$, so that $\widetilde\sigma$ has a unique fixed point in $\Hom_K(L,N)$.
Since $\Hom_{K_v}(L_u,\widehat N)$ can be viewed canonically as a subset of $\Hom_K(L,N)$, it follows that $\sigma$ has at most a fixed point in $\Hom_{K_v}(L_u,\widehat N)$, so that the extension $L_u/K_v$ of local fields is exceptional.
\end{proof}

Let us conclude our third proof of Theorem~\ref{gcw} with another proof of Theorem~\ref{local}, which is more insightful but requires more knowledge of exceptional local field extensions developed in Section 4.

\begin{proof}[The second proof of Theorem~\ref{local}]
Write $n=mp^{\ell}$ where $p\colonequals \charp(\F_q)$ and $p\nmid m$. Then there exists an intermediate field $M$ of $L/K$ such that $[L:M]=p^{\ell}$ and $[M:K]=m$.
By Theorem~\ref{subext} the exceptionality of $L/K$ says that both $L/M$ and $M/K$ are exceptional. What we really need here is only that $M/K$ is exceptional, which by Theorem~\ref{coprime} says that $\gcd(m,q-1)=1$, or equivalently $\gcd(n,q-1)=1$.
\end{proof}


\begin{thebibliography}{99}
\newcommand{\au}[1]{{#1},}
\newcommand{\ti}[1]{\textit{#1},}
\newcommand{\jo}[1]{{#1}}
\newcommand{\vo}[1]{\textbf{#1}}
\newcommand{\yr}[1]{(#1),}
\newcommand{\ppx}[1]{#1,}
\newcommand{\pp}[1]{#1.}
\newcommand{\pps}[1]{#1}
\newcommand{\bk}[1]{{#1},}
\newcommand{\inbk}[1]{in: \bk{#1}}
\newcommand{\xxx}[1]{{arXiv:#1}.}

\bibitem{BGR}
Bosch S, G\"untzer U, Remmert R.
Non-Archimedean Analysis.
Grundlehren Math Wiss, vol. 261.
Berlin: Springer-Verlag, 1984.


\bibitem{C}
Cassels J W S.
Local Fields.
London Math. Soc. Stud. Texts, vol. 3.
Cambridge: Cambridge University Press, 1986.

\bibitem{CaF}
Cassels J W S, Fr\"ohlich A, eds.
Algebraic Number Theory.
London: Academic Press,  1967.

\bibitem{Cavior-octic}
Cavior S R.
A note on octic permutation polynomials.
Math Comp, 1963, 17: 450--452.

\bibitem{Co}
Cohen S D.
The distribution of polynomials over finite fields.
Acta Arith, 1970, 17: 255--271.

\bibitem{Cohen-primitive}
Cohen S D.
Permutation polynomials and primitive permutation groups.
Arch Math (Basel), 1991, 57: 417--423.

\bibitem{CF}
Cohen S D, Fried M D.
Lenstra's proof of the Carlitz--Wan conjecture on exceptional polynomials: an elementary version.
Finite Fields Appl, 1995, 1: 372--375.

\bibitem{DL}
Davenport H,  Lewis D J.
Notes on congruences (I).
Quart J Math Oxford (2), 1963, 14: 51--60.


\bibitem{Dickson-prime}
Dickson L E.
Analytic functions suitable to represent substitutions.
Amer J Math, 1896, 18: 210--218.

\bibitem{Di}
Dickson L E.
The analytic representation of substitutions on a power of a prime number of letters with a discussion of the linear group.
Ann of Math, 1897, 11: 65--120.

\bibitem{DZquad}
Ding Z, Zieve M E.
Determination of a class of permutation quadrinomials.
Proc London Math Soc (3), 2023, 127: 221--260.


\bibitem{DZexc}
Ding Z, Zieve M E.
A new family of exceptional rational functions.
Int Math Res Not, 2023, 2023: 3073--3091.


\bibitem{DZAA}
Ding Z, Zieve M E.
Low-degree permutation rational functions over finite fields.
Acta Arith, 2022, 202: 253--280.

\bibitem{DZJA}
Ding Z, Zieve M E.
Extensions of absolute values on two subfields.
J Algebra, 2022, 598: 105--119.

\bibitem{FGS}
Fried M D, Guralnick R M, Saxl J.
Schur covers and Carlitz's conjecture.
Israel J Math, 1993, 82: 157--225.

\bibitem{GM}
Guralnick R M, M\"uller P.
Exceptional polynomials of affine type.
J Algebra, 1997, 194: 429--454.

\bibitem{GMS}
Guralnick R M, M\"uller P, Saxl J.
The rational function analogue of a question of Schur and exceptionality of permutation representations.
Mem Amer Math Soc, 2003, 162, no. 773, 79 pp.

\bibitem{GRZ}
Guralnick R M, Rosenberg J, Zieve M E.
A new family of exceptional polynomials in characteristic two.
Ann of Math (2), 2010, 172: 1361--1390.

\bibitem{GTZ}
Guralnick R M, Tucker T J, Zieve M E.
Exceptional covers and bijections on rational points.
Int Math Res Not, 2007, 2007, rnm004, 20 pp.


\bibitem{GZ}
Guralnick R M, Zieve M E.
Polynomials with $\PSL(2)$ monodromy.
Ann of Math (2), 2010, 172: 1315--1359.

\bibitem{H}
Hartshorne R.
Algebraic Geometry.
Grad Texts in Math, vol. 52.
New York: Springer-Verlag, 1977.

\bibitem{Hayes}
Hayes D R.
A geometric approach to permutation polynomials over a finite field.
Duke Math J, 1967, 34: 293--305.

\bibitem{LZ}
Lenstra H W Jr, Zieve M.
A family of exceptional polynomials in characteristic three.
In: Finite Fields and Applications (Glasgow, 1995).
London Math. Soc. Lecture Note Ser, vol. 233.
Cambridge: Cambridge University Press, 1996, 209--218.


\bibitem{Ne}
Neukirch J.
Algebraic Number Theory.
Grundlehren Math Wiss, vol. 322.
Berlin: Springer-Verlag, 1999.

\bibitem{S}
Serre J-P.
Local Fields.
Grad Texts in Math, vol. 67.
New York: Springer-Verlag, 1979.

\bibitem{St}
Stichtenoth H.
Algebraic Function Fields and Codes.
Second edition.
Grad Texts in Math, vol. 254.
Berlin: Springer-Verlag,  2009.


\bibitem{Wan-Carlitz}
Wan D.
On a conjecture of Carlitz.
J Austral Math Soc Ser A, 1987, 43: 375--384.

\bibitem{Wan-singularities}
Wan D.
Permutation polynomials and resolution of singularities over finite fields.
Proc Amer Math Soc, 1990, 110: 303--309.

\bibitem{Wan-Carlitz-Wan_conjecture}
Wan D.
A generalization of the Carlitz conjecture.
In: Finite fields, Coding theory, and Advances in Communications and Computing.
Proceedings of the international conference held at the University of Nevada, Las Vegas, Nevada, August 7--10, 1991.
Lecture Notes in Pure and Appl Math, vol. 141.
New York: Marcel Dekker, 1993, 431--432.

\bibitem{Z}
Zieve M E.
Exceptional polynomials.
In: Handbook of Finite Fields.
Boca Raton: CRC Press, 2013, 236--240.


\end{thebibliography}
\end{document}